\providecommand\@dotsep{5}
\def\listtodoname{List of Todos}
\def\listoftodos{\@starttoc{tdo}\listtodoname}
\numberwithin{equation}{section}
\newtheorem{theo}{Theorem}[section]
\newtheorem{lemma}[theo]{Lemma}
\newtheorem{prop}[theo]{Proposition}
\newtheorem{remark}[theo]{Remark}
\renewenvironment{proof}[1][\proofname]{
  \par\pushQED{\qed}\normalfont
  \topsep6\p@\@plus6\p@\relax
  \trivlist\item[\hskip\labelsep\bfseries#1\@addpunct{.}]
  \ignorespaces
}{
  \popQED\endtrivlist\@endpefalse
}
\providecommand{\abs}[1]{\lvert#1\rvert}
\providecommand{\norm}[1]{\lVert#1\rVert}
\DeclareMathOperator{\cat}{cat}
\title[Positive solutions to SP system in expanding domains]{Small normalised solutions for a\\
 Schr\"odinger-Poisson system in expanding domains:\\
multiplicity and asymptotic behaviour}
\author[E. G. Murcia]{ Edwin Gonzalo Murcia}
\author[G. Siciliano]{Gaetano Siciliano}
\address[E. G. Murcia]{\newline\indent Departamento de Matem\'aticas
\newline\indent 
Facultad de Ciencias
\newline\indent 
 Pontificia Universidad Javeriana 
\newline\indent 
Carrera 7 No. 43-82 Bogot\'a, Colombia}
\email{\href{mailto:murciae@javeriana.edu.co}{murciae@javeriana.edu.co}}
\address[G. Siciliano ]{\newline\indent Dipartimento di Matematica
\newline\indent 
Universit\`a degli Studi di Bari Aldo Moro
\newline\indent 
Via E. Orabona 4, 70125 Bari, Italy}
\email{\href{mailto:gaetano.siciliano@uniba.it}{gaetano.siciliano@uniba.it}}
\subjclass[2020]{35D30, 35J50, 35Q55}
\keywords{critical point theory, Ljusternick-Schnirelmann category, barycenter map, multiplicity of solutions.}
\begin{document}

\begin{abstract}
Given a smooth bounded domain $\Omega\subset \mathbb R^3$,
we consider the following  nonlinear Schr\"odinger-Poisson type system
\begin{equation*}
 \left\{
	    \begin{array}{ll}
	-\Delta u+ \phi u -\abs{u}^{p-2}u =  \omega u & \quad \text{in } \lambda\Omega,\\
	-\Delta\phi =u^{2}& \quad \text{in }\lambda\Omega,\\
	u>0 &\quad \text{in }\lambda\Omega,\\
	u =\phi=0 &\quad \text{on }\partial (\lambda\Omega),\\
	\int_{\lambda\Omega}u^{2} \,\text{d} x=\rho^2
	    \end{array}
    \right.
\end{equation*}
in the expanding domain $\lambda\Omega\subset \mathbb R^{3}, \lambda>1$
and $p\in (2,3)$, in the unknowns $(u,\phi,\omega)$.
 We show that, for arbitrary large values of the expanding parameter $\lambda$
and arbitrary  small values of the mass $\rho>0$, the number of solutions 
is at least the Ljusternick-Schnirelmann category
of $\lambda\Omega$.
Moreover we show that as $\lambda\to+\infty$ the solutions found converge to
a ground state of the problem in the whole space $\mathbb R^{3}$.
\end{abstract}
\maketitle

\begin{center}
	\begin{minipage}{8.5cm}
		\small
		\tableofcontents
	\end{minipage}
\end{center}

\bigskip

\section{Introduction}

 Elliptic systems involving the   Schr\"odinger and the Maxwell equations have attracted a lot of interest in  mathematical physics
 in the last decades. Many authors have studied this intriguing problem, which takes into account the interaction of a nonrelativistic 
 particle with its own electromagnetic field and it is almost impossible to give a complete list of references on the topic.
 The huge literature existing  deals mainly with the equations settled   in the whole space $\mathbb R^{N},$ while only few works study the case of bounded domains where boundary conditions may even prevent the existence of solutions
 (see the work cited below).
We  cite here the pioneering paper of Benci and Fortunato \cite{BF} for two reasons:
first, it is the inspiration of our paper, and second because it seems  they first gave
a deduction of the equations which describe the interaction of the matter field 
with the electromagnetic field in the framework of Abelian Gauge Theories, in place
of the usual and classical Hartree and Thomas-Fermi-von Weizs\"acker Theory of atoms and molecules (see e.g.\cite{B,CL}).

Without entering in details in the physical and mathematical derivation of the equations
(beside \cite{BF}, the  interested reader is also referred to \cite{AP, DM, dA}) the search of stationary solutions
$$\psi(x,t)=u(x) e ^{i\omega t}\in \mathbb C\,, \  \quad u(x),\ \omega\in \mathbb R, $$
of the so called Schr\"odinger-Maxwell  system  in the purely electrostatic case, i.e.
$$\phi(x,t)=\phi(x), \quad \mathbf A(x,t)=\mathbf 0,$$ 
(where $(\phi, \mathbf A)$ is the gauge potential of the electromagnetic field)
 leads to the following model problem, known as the {\sl Schr\"odinger-Poisson system}:
\begin{equation*}
    \left\{
	    \begin{array}{ll}
	    	-\Delta u+ \phi u-|u|^{p-2}u= \omega u & \quad \text{in }\Omega,\\
	        -\Delta \phi =u^2 &\quad \text{in }\Omega,\\
	        u,\phi =0 &\quad \text{on }\partial \Omega.
	    \end{array}
    \right.
\end{equation*}
It is  assumed that the particle  is going to be ``observed'' in a  region $\Omega$
in $\mathbb R^{3}$ where it is  confined, and $p$ is a suitable exponent.
Here $u$  and $\phi$ are unknowns of the problem.

For what concern the number $\omega$, the frequency of the wave function, there are two different points of view,
depending on the problem one wants to study:
\begin{itemize}
\item[I.] $\omega$ is a given datum of the problem; this means that one is interested in finding wave functions with a given frequency. Sometimes this produces restrictions on the values of $\omega$ in order to obtain solutions. 
For the  problem in a bounded domain the reader is referred, for example, to\cite{Alves,PS,RS,S}. 
\item[II.] $\omega$ is not given; in this case the wave function is completely unknown,
no a priori value of the frequency is specified so the unknowns of the problem are $u,\phi$ and $\omega$.
In contrast to the previous case, this arises when  the $L^{2}$-norm of the solutions is a priori fixed. 
Beside the paper \cite{BF}, we cite \cite{CCM, TMNA} where a Neumann condition on $\phi$ is considered.
The problem in the whole $\mathbb R^{N}$ is studied in \cite{BS1,BS2} (see also the references therein) where
the main difficult was to recover some compactness {\sl \`a la Lions}.

\end{itemize}
We mention also \cite{ORL} where both cases I. and II. are treated.

\subsection{The problem addressed in this paper}

In this paper we are interested in the second point of view described above since
we believe it is more natural than the first. 
Indeed the wave function is usually
unknown, so also $\omega$ has to be treated as an unknown. This has the following consequences:
since the system is variational, we will find the solutions as critical points of
the associated {\sl energy functional} and the fact that $\omega$ is unknown led us to
find  critical points restricted to the constraint of functions with fixed $L^{2}$-norm.
Hence  $\omega$ will appear naturally 
as a Lagrange multiplier.
The restriction to functions $u$ with fixed $L^{2}$-norm has also a physical consistency since 
$|\psi(x,t)|^{2} = u^{2}(x)$ and 
it is known that the $L^{2}$-norm of the solutions of the Schr\"odinger equation is constant in time.

More specifically, our aim in this paper is to show the existence of solutions $(u,\phi,\omega)$ for the following system in an expanding domain $\lambda \Omega, \lambda>1$,
\begin{equation*}\label{omegal}\tag{$P_{\lambda}$}
    \left\{
	    \begin{array}{ll}
	    	-\Delta u+ \phi u-\abs{u}^{p-2}u =  \omega u & \quad \text{in }\lambda\Omega,\smallskip\\
	        -\Delta \phi =u^2 &\quad \text{in }\lambda\Omega,\smallskip\\
	        u>0 &\quad \text{in }\lambda\Omega, \smallskip\\
	        u,\phi =0 &\quad \text{on }\partial (\lambda\Omega),\smallskip\\
\displaystyle	        \int_{\lambda\Omega} u^{2} \text{d} x =\rho^2.
	    \end{array}
    \right.
\end{equation*}

The set $\Omega\subset \mathbb R^{3}$ is a smooth and bounded {\sl domain}, i.e., open and connected.
The requirement   $u>0$ is very natural, being $u$ the modulus of the wave function
(observe {\sl en passant} that the positivity of $\phi$ in $\lambda \Omega$ is granted for free).

It is classical by now (see the approach in \cite{BF})
that \eqref{omegal}  can be reduced to a single equation involving a nonlocal term;
indeed calling $\phi_{u}\in H^{1}_{0}(\Omega)$ the unique (and positive) solution of 
\[
    -\Delta \phi =u^2 \quad \text{in }\lambda\Omega,\quad \phi =0 \quad \text{on }\partial (\lambda\Omega)
\]
for fixed $u\in H^{1}_{0}(\lambda\Omega)$,
the problem \eqref{omegal} can be equivalently written as
\begin{equation}\label{eq:equacion}
 \left\{
	    \begin{array}{ll}
	-\Delta u+ \phi_{u} u-\abs{u}^{p-2}u =  \omega u & \quad \text{in }\lambda\Omega,\smallskip\\
	u>0 &\quad \text{in }\lambda\Omega,\smallskip\\
	u =0 &\quad \text{on }\partial (\lambda\Omega),\smallskip\\
\displaystyle		        \int_{\lambda\Omega} u^{2} \text{d} x =\rho^2,
	    \end{array}
    \right.
\end{equation}
to which we will refer from now on.
In this way by a (weak) solution of \eqref{eq:equacion} we simply mean a pair  $(u,\omega)\in H^{1}_{0}(\lambda\Omega)\times\mathbb R$ with $u$ having (squared) $L^{2}$-norm equals to $\rho^2$ for $\rho >0$, such that
\[
\forall v\in H^{1}_{0}(\lambda\Omega) : \int_{\lambda\Omega}\nabla u\nabla v\,\text{d} x+\int_{\lambda\Omega}
\phi_{u} uv\,\text{d} x-\int_{\lambda\Omega} |u|^{p-2}uv \,\text{d} x= \omega\int_{\lambda\Omega} uv\,\text{d} x.
\]
Of course, the solutions will depend on $\rho$ and $\lambda$. By standard variational principles we know that the solutions can be found as critical points of the $C^{1}$ energy functional (it is useful to have the explicit dependence on the domain considered)
\[
    I(u;\lambda\Omega):=\frac{1}{2}\int_{\lambda\Omega}\abs{\nabla u}^2\text{d} x+\frac{1}{4}\int_{\lambda\Omega}\phi_uu^2\text{d} x-\frac{1}{p}\int_{\lambda\Omega}\abs{u}^p\text{d} x
\]
on the constraint given by the $L^{2}$-sphere
\[
    \mathcal M_{\rho} (\lambda\Omega):=\Big\{u\in H^{1}_{0}(\lambda\Omega) : \int_{\lambda\Omega}u^{2} \text{d} x=\rho^2 \Big\}
\]
and $\omega$ appears as the Lagrange multiplier.
The term {\sl ground state} is used to refer to the solution with minimal energy.

To state our result, let us recall that in \cite{BS1} it has been  proved  that
there exists $\rho_1=\rho_1(p) >0$ such that for any $\rho\in(0,\rho_{1})$
the problem on the whole space $\mathbb R^{3}$ has a ground state solution $\mathfrak w_{\infty}$ and 
$$c_{\infty}:=\min_{u\in \mathcal M_{\rho}(\mathbb R^{3})} I(\mathfrak w_{\infty}; \mathbb R^{3})<0.$$
Moreover,  by the results in \cite{GPV}, the associated Lagrange multiplier $\omega_{\infty}$ is negative
and  the ground state $\mathfrak w_{\infty}$ is positive and radially symmetric.

\medskip 

Roughly speaking, our result given below states that for small value of the $L^{2}$-norm, the number of solutions is   influenced by the topology of the domain,
at least when the domain is very large.

Hereafter for a topological pair $Y\subset X$, $\cat_{X}(Y)\in \mathbb N$ denotes the Ljusternick-Schnirelmann category; if $X=Y$ we simply write $\cat X$.

The result is the following
\begin{theo}\label{th:main}
Let $p\in (2,3)$ and let $N=\emph{cat}\, \Omega$. There exists $\rho_{1}=\rho_{1}(p)>0$ such that, for every  $\rho\in(0,\rho_{1})$ there is 
 $\Lambda>1$ such that for any $\lambda \in (\Lambda,+\infty)$, the problem \eqref{eq:equacion} has at least 
 $N$ solutions $(u^i_{\rho,\lambda},\omega^i_{\rho,\lambda})\in \mathcal M_{\rho}(\lambda \Omega)\times \mathbb R$, $i=1,\ldots, N$. Moreover, for every $i$, as $\lambda \to +\infty$,
\[
    I(u^i_{\rho,\lambda};\lambda \Omega)\to c_\infty<0,\quad \omega^{i}_{\rho,\lambda}\to \omega_\infty<0,
\]
and, up to translations,
\[
    u^{i}_{\rho,\lambda}\to \mathfrak w_\infty \text{ in } H^1(\mathbb R^3).
\]
  
 Furthermore, if $\Omega$ is not contractible in itself, 
 besides the solutions just found, there is another one 
 $(\widetilde u_{\rho,\lambda}, \widetilde \omega_{\rho, \lambda})$ with $\widetilde u_{\rho,\lambda}$ nonnegative
and at a higher energy level. 
\end{theo}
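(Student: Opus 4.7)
The strategy I would adopt follows the Benci–Cerami photography/barycentre scheme, adapted to the $L^{2}$-constrained setting with a nonlocal Poisson interaction. The plan is to exhibit a sublevel set
\[
    I^{d}:=\{u\in \mathcal M_{\rho}(\lambda\Omega):I(u;\lambda\Omega)\le d\},
\]
with $d$ slightly larger than $c_{\infty}$, whose Ljusternick-Schnirelmann category is at least $\cat\Omega$, and then apply the standard LS multiplicity theorem. Because $c_{\infty}<0$ and $p\in(2,3)$ is $L^{2}$-subcritical, for $\rho$ small I expect $I$ to be bounded below on $\mathcal M_{\rho}(\lambda\Omega)$ and Palais–Smale sequences to be bounded; compactness of the Sobolev embedding on each fixed bounded domain gives PS in the usual way, the only delicate point being the uniform control as $\lambda\to\infty$, handled via a splitting lemma that says a PS sequence either converges strongly or splits off a translated copy of $\mathfrak w_{\infty}$ escaping at infinity.

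Concretely, I would first build a continuous map $\Phi_{\lambda}:\Omega\to I^{d}$, sending $y\in\Omega$ to a renormalisation of a smoothly truncated translate $\mathfrak w_{\infty}(\cdot-\lambda y)$ supported in $\lambda\Omega$. Using the exponential decay of the ground state, $I(\Phi_{\lambda}(y);\lambda\Omega)\to c_{\infty}$ uniformly in $y\in\Omega$ as $\lambda\to\infty$, so $\Phi_{\lambda}(\Omega)\subset I^{d}$ for $\lambda$ large. Next I would introduce the barycentre $\beta(u):=\rho^{-2}\int_{\lambda\Omega} x\,u^{2}\,dx$ (with a possible mild truncation) and prove the key lemma: for $d$ close to $c_{\infty}$ and $\lambda$ large, $\beta(u)/\lambda$ lies in a fixed tubular neighbourhood $\Omega^{+}$ of $\Omega$ that retracts onto $\Omega$, whenever $u\in I^{d}\cap\mathcal M_{\rho}(\lambda\Omega)$. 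This is shown by contradiction through the splitting analysis: a sequence violating the conclusion must, up to translation, converge to $\mathfrak w_{\infty}$, which is inconsistent with the assumed position of its barycentre. The composition of the retraction with $(\beta/\lambda)\circ\Phi_{\lambda}$ is then homotopic to $\mathrm{id}_{\Omega}$, yielding $\cat_{I^{d}}(I^{d})\ge\cat\Omega=N$ and hence $N$ critical points of $I|_{\mathcal M_{\rho}(\lambda\Omega)}$ via LS category theory.

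For the asymptotics, denote by $u^{i}_{\rho,\lambda}$ the $N$ solutions produced. One has $c_{\infty}\le I(u^{i}_{\rho,\lambda};\lambda\Omega)\le d$ with $d$ chosen arbitrarily close to $c_{\infty}$, giving the energy convergence. Applying the splitting lemma to the sequence of solutions themselves forces, up to translation by points $\lambda y^{i}_{\lambda}\in\lambda\Omega$, convergence $u^{i}_{\rho,\lambda}\to\mathfrak w_{\infty}$ in $H^{1}(\mathbb R^{3})$; passing to the limit in the Euler–Lagrange equation identifies the Lagrange multiplier $\omega^{i}_{\rho,\lambda}\to\omega_{\infty}$. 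When $\Omega$ is not contractible, $\Phi_{\lambda}(\Omega)$ is not contractible in $I^{d}$ (through the retraction just built), whereas the whole $L^{2}$-sphere $\mathcal M_{\rho}(\lambda\Omega)$ is contractible. A standard min-max on the family of contractions of $\Phi_{\lambda}(\Omega)$ inside $\mathcal M_{\rho}(\lambda\Omega)$ then yields a critical level strictly above $d$, giving the additional solution $(\widetilde u_{\rho,\lambda},\widetilde\omega_{\rho,\lambda})$; by restricting the min-max class to non-negative functions (preserved by a suitable pseudo-gradient) one obtains $\widetilde u_{\rho,\lambda}\ge 0$.

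The main obstacle is the barycentre concentration lemma supporting Step 3: the Poisson term is nonlocal and the $L^{2}$ constraint destroys the scaling invariance that simplifies splitting arguments in the unconstrained setting, so proving that low-energy functions concentrate at a single point in $\Omega$ (uniformly in large $\lambda$) requires a careful concentration-compactness analysis tailored to the expanding-domain geometry. Once this is in hand, the LS multiplicity, the convergence to the ground state, and the extra critical point via min-max follow by relatively standard machinery.
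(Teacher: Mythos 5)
Your overall architecture coincides with the paper's: a photography map into a low sublevel, a barycentre map back to a neighbourhood of $\lambda\Omega$, Ljusternick--Schnirelmann category plus the $(PS)$ condition for the $N$ solutions, identification of the solutions as minimizing sequences for $c_\infty$ to get the asymptotics, and a cone/min-max construction \emph{\`a la} Benci--Cerami--Passaseo for the extra solution. However, there are two concrete gaps. First, the quantifier structure of your key lemma is not supported by the tools you invoke. You fix a level $d$ slightly above $c_\infty$ and then send $\lambda\to+\infty$; but a sequence $u_n\in I^{d}\cap\mathcal M_\rho(\lambda_n\Omega)$ need only satisfy $c_{\lambda_n}\le I(u_n;\lambda_n\Omega)\le d$, so it is \emph{not} a minimizing sequence for $c_\infty$, and the precompactness-up-to-translations result of Bellazzini--Siciliano (which is stated only for minimizing sequences) does not apply to it. At a fixed level $d>c_\infty$ one would have to exclude, e.g., dichotomy into two half-mass bumps via a quantitative strict subadditivity argument --- the ``careful concentration-compactness analysis'' you defer is precisely the hard point. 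The paper sidesteps this entirely by using a $\lambda$-dependent threshold $l(\lambda)=b^*_\lambda+\frac{1}{\lambda}\bigl(1+\frac{M_{B_r}\rho^4}{4}\bigr)\to c_\infty$, squeezed so that (i) every $u$ in the sublevel is automatically asymptotically minimizing (hence the splitting machinery applies), and (ii) the photography map, built from the \emph{radial minimizer on the ball} $\mathfrak w^*_{B_{\lambda r}}$ rather than a truncated translate of $\mathfrak w_\infty$, lands below $l(\lambda)$ by an exact Green-function comparison $I(\Psi_{\lambda,r}(y);\lambda\Omega)<b^*_\lambda+\frac{M_{B_r}\rho^4}{4\lambda}$. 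If you keep truncated translates of $\mathfrak w_\infty$, you must additionally quantify the truncation error against the rate $d(\lambda)-c_\infty\to 0$, which requires decay estimates you have not established. The barycentre-escape contradiction itself is then organised in the paper through the auxiliary annulus infima $a(R,r,\lambda)$ and the strict inequality $\liminf_\lambda a(R,r,\lambda)>c_\infty$; your direct ``concentration at a single point forces the barycentre into $\lambda\Omega^+_r$'' argument is the same mechanism once the sequences are genuinely minimizing.

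Second, the theorem asserts $u^i_{\rho,\lambda}>0$, and your proposal never addresses positivity (or even a sign) of the $N$ solutions; your pseudo-gradient remark concerns only the extra one, and the positive cone of $\mathcal M_\rho(\lambda\Omega)$ is not invariant under the gradient flow of $I$, so ``restricting the min-max class to non-negative functions'' does not work as stated. The paper's fix is to run the entire scheme for the modified functional $I^+$ with $\frac{1}{p}\int(u^+)^p$ in place of $\frac{1}{p}\int|u|^p$, obtaining nonnegative critical points, and then to apply the maximum principle using the fact that for $\lambda$ large the multipliers $\omega^i_{\rho,\lambda}$ are negative (so that $-\Delta u+(\phi_u-\omega)u=(u^+)^{p-1}\ge0$ with nonnegative zero-order coefficient). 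Note also that this is exactly why only \emph{nonnegativity} is claimed for the higher-energy solution $\widetilde u_{\rho,\lambda}$: its level and multiplier are not controlled, so the maximum-principle step is unavailable there. Finally, a minor point: your map $\Phi_\lambda$ should be defined on an inner approximation $\lambda\Omega^-_r$ (homotopically equivalent to $\Omega$) rather than on $\Omega$ itself, so that the supports of the translated bumps stay inside $\lambda\Omega$ uniformly.
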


We remark here, once for all,
that the solutions $u$ we will find are indeed solutions in the classical sense.

\medskip

\subsection{Comparison with known results}

Now an explicit comparison with the paper of Benci and Fortunato \cite{BF} is in order.
In the paper \cite{BF} infinitely many solutions $(u_{k}, \omega_{k})_{k\in \mathbb N}$ are found for the problem
\begin{equation}\label{eq:equacionBF}
 \left\{
	    \begin{array}{ll}
	-\Delta u+ \phi_{u} u =  \omega u & \quad \text{in }\Omega,\smallskip\\
	u =0 &\quad \text{on }\partial \Omega,\smallskip\\
\displaystyle	\int_{\Omega} u^{2} =1
	    \end{array}
    \right.
\end{equation}
in a fixed domain $\Omega$. The sequence $\{u_{k}\}$
consists of critical points at minimax levels 
(over the class of sets having arbitrary large {\sl Krasnoselskii genus})
of the energy functional
restricted to the unit sphere in $L^{2}(\Omega)$,
on which it is bounded from below.
Actually, as it is observed in \cite[Appendix]{PS}, the same result holds by adding a
nonlinearity $-|u|^{p-2}u$ in the left hand side of the equation in \eqref{eq:equacionBF}, for $p\in (2,10/3)$.

Even though Benci and Fortunato work on the unit sphere in $L^{2}(\Omega)$, their result
is true for every value of radius $\rho>0$, namely on $\mathcal M_{\rho}(\Omega)$.
 Summing up,  the result of \cite{BF} follows for the problem 
 \begin{equation*}
 \left\{
	    \begin{array}{ll}
	-\Delta u+ \phi_{u} u -|u|^{p-2}u =  \omega u & \quad \text{in }\Omega, \smallskip\\
	u =0 &\quad \text{on }\partial \Omega, \smallskip\\
\displaystyle\int_{\Omega} u^{2} =\rho^2
	    \end{array}
    \right.
\end{equation*}
for any $p\in(2,10/3), \rho>0$, and they also obtain that 
 \begin{itemize}
 \item[(a)] the solutions $u_{k}$  are possibly sign changing, except of course the solution which minimizes the related functional $I(\, \cdot \, ;\Omega)$ restricted to $\mathcal M_{\rho}(\Omega)$,
 \item[(b)] the sequence of critical levels $\{I(u_{k};\Omega)\}$ is diverging,
  \item[(c)] the sequence of Lagrange multipliers $\{\omega_{k}\}$ is diverging.
 \end{itemize}

\medskip

Our paper gives a contribution in the sense that  solutions  different from those found in \cite{BF}
are furnished here. Indeed for large (expanding) domains we find solutions  that
\begin{itemize}
\item[(a')] are all positive,
\item[(b')]  are  at a negative level of the energy functional,
\item[(c')]  all have  negative related   Lagrange multipliers,
\item[(d')] when the domain is larger and larger the Lagrange multipliers, as well the solutions $u$, are  converging
to the solutions in the whole space.
\end{itemize}

However we can prove this just for solutions with small $L^{2}$-norm  and our result just follows for $p\in(2,3)$
and not in the whole range $(2,10/3)$.

\medskip

Our approach is slightly different  from that in \cite{BF} in the sense that we use another topological invariant which also adapts to non even functionals and permits to gain the positivity of the solutions. Indeed, by means of  the Ljusternick-Schnirelmann category
and the barycenter map introduced by Benci and Cerami in \cite{BC}, we are able to find a number of solutions depending on the ``topological complexity'' of the domain $\lambda\Omega$. The main difficulty here is due to the presence  of a nonlocal term driven by the Green function;
in fact, fine estimates have to be found in order to deal with  the Green function and  compare suitable infima
when the domain  is  expanding, and implement the barycenter map on suitable sublevels of the energy functional.

As it will be clear in Section \ref{sec:prelim}, the restrictions on $p,\rho$
and $\lambda$ in the main theorem are due to:  {\em(i)} the boundedness from below of the functional, 
{\em (ii)} the evaluation of the barycenters of functions in suitable sublevels of the functional $I(\, \cdot\,; \lambda \Omega)$, and {\em (iii)} the radial property  of the ground state solution 
$\mathfrak w_{\infty}$ for the limit problem in the whole $\mathbb R^{3}$.

We observe that  problems in expanding domains have attracted attention in the ma\-the\-ma\-ti\-cal literature and also have been studied in other contexts; see e.g., \cite{A,FPS}.
However this is the first paper where, besides a multiplicity result, a convergence result is also presented.

\medskip

\subsection{Organization of the paper}
The organization of the paper is the following: in Section \ref{sec:prelim} we introduce basic notations, we recall some well known facts and give the variational framework
of the problem. In Section \ref{sec:bary} the barycenter map is introduced; it will be a fundamental tool 
in order to employ the Ljusternick-Schnirelmann theory. Here is seen the role played by large values of $\lambda$.
 In Section \ref{sec:finale}
the proof of Theorem \ref{th:main} is given.
In a final Appendix we show the striking difference between the whole space and  a bounded domain 
for what concerns the minimum of the functional
restricted to the $L^{2}$-sphere.

\section{Some preliminaries}\label{sec:prelim}
Let us start by introducing few notations. 

In all the paper $\Omega \subset \mathbb{R}^3$ is a (smooth and) bounded domain such that $0\in \Omega$. Hereafter the open ball centered in $x_{0}\in \mathbb R^{3}$ with radius $\widehat{r}>0$ will be denoted with $B_{\widehat{r}}(x_{0})$. Note that, for $\lambda >1$ and $x_0\neq 0$, we have $\lambda B_{\widehat{r}}(x_{0})=B_{\lambda \widehat{r}}(\lambda x_{0})$, so that $\lambda B_{\widehat{r}}(x_{0})\neq B_{\lambda \widehat{r}}(x_{0})$. We use $d(x,D)$ to denote the distance between a point $x\in \mathbb R^{3}$ and $D\subset \mathbb R^{3}$. Also let us fix, from now on, a small $r>0$ such that $B_{r}:=B_{r}(0)\subset \Omega$ and, setting 

\begin{align*}
    \Omega ^+_r & := \Big\{ x\in \mathbb R^3: d(x,\Omega)\leq r \Big\} \\
    \Omega ^-_r & := \Big\{ x \in \Omega: d(x,\partial \Omega ) \geq r \Big\}\,,
\end{align*}
the sets
\begin{equation}\label{II_4.9}
    \Omega,\, \Omega ^+_r, \text{ and }  \Omega ^-_r \text{ are homotopically equivalent}.
\end{equation} 
In particular, for any $\lambda>1$, the same is true for
\[
\lambda\Omega,\ \ \   \lambda\Omega_{r}^{+}
\quad \text{ and } \ \  \ \lambda\Omega_{r}^{-}.
\]
In all the paper $r$ will be the fixed value above.

 The symbol $o_s(1)$ denotes a quantity which goes to zero as $s\to +\infty$.
 
We denote with $|\cdot |_{L^{q}(D)}$ the usual $L^{q}$-norm,
and 
let $H^1_{0} (D)$ be the usual Sobolev space endowed with  equivalent (squared) norm 
\[
 \norm{u}^2_{H^1_0(D)}= \displaystyle\int_D \abs{\nabla u}^2 \text{d} x.
\]

As noted earlier, for every fixed $u\in H^{1}_{0}(D)$, the problem
\begin{equation}\label{eq:P}
    \left\{
        \begin{array}{ll}
             -\Delta \phi=u^2 & \text{in }D,\smallskip\\
             \phi=0 & \text{on }\partial D
        \end{array}
    \right.
\end{equation}
possesses a unique solution $\phi _u \in H^1_0(D)$. 
A list of properties of  $\phi_{u}$ can be found e.g., in \cite[Lemma 1.1]{Alves}.
We just observe here that, if $\mu_{1}(D)$ denotes the first eigenvalue of the Laplacian in the domain $D$,
\[
    \int_D \abs{\nabla \phi _u}^2 \text{d} x= \int_D \phi_u u^2 \text{d} x\leq \abs{\phi_u}_{L^2(D)}\abs{u^2}_{L^2(D)}\leq \mu_{1}^{-1/2}(D)\norm{\phi_u}_{H^1_0(D)}\norm{u}^2_{H^1_0(D)}
\]
and thus,
\begin{equation} \label{eq:convergencia}
    \int_D \phi_u u^2 \text{d} x\leq \mu_{1}^{-1}(D)\norm{u}^4_{H^1_0(D)}.
\end{equation}

We use the convention that given a function in $H^{1}_{0}(D)$ we will denote with the same letter the function trivially extended to the whole $\mathbb R^{3}$, which then belongs to $H^{1}(\mathbb R^{3})$.
In particular, for $u\in H^{1}_{0}(D)\subset H^{1}(\mathbb R^{3})$ if $\phi_{u}\in H^{1}_{0}(D)$ is the unique solution of \eqref{eq:P},
we also have $\phi_{u}\in H^{1}(\mathbb R^{3})$ and
\[
     -\Delta \phi_{u}=u^2 \text{ in }D,\quad \phi_{u}=0 \text{ in }\mathbb R^{3}\setminus D.
\]
However, it is important to establish explicitly the difference between $\phi_u$ and the unique solution $\varphi_u$ of the problem
\[
    -\Delta \varphi=u^2 \text{ in }\mathbb R^{3}.
\]
We will formulate the relationship between these two functions (see Fact 4 below).

\medskip

We know that the solutions $(u,\omega)$ of the problem
\[
    \left\{
	    \begin{array}{ll}
	-\Delta u+ \phi_{u} u-\abs{u}^{p-2}u =  \omega u & \quad \text{in }D,\smallskip\\
	u>0 &\quad \text{in }D,\smallskip\\
	u =0 &\quad \text{on }\partial D,\smallskip\\
		   \displaystyle     \int_{D} u^{2} \text{d} x =\rho^2,
	    \end{array}
    \right.
\]
in a smooth bounded domain $D$ can be characterized as critical points of the 
$C^{1}$ functional in $H^{1}_{0}(D)$
\[
    I(u; D):= \frac{1}{2}\int_D \abs{\nabla u}^2 \text{d} x+\frac{1}{4}\int_D \phi_u u^2 \text{d} x-\frac{1}{p}\int_D \abs{u}^p \text{d} x
\]
restricted to the manifold
\[
   \mathcal M_ \rho (D):= \Big\{ u\in H^1_0(D): \int_D u^2 \text{d} x=\rho^2\Big\}, \qquad \rho>0.
\]
Analogously, in case of the whole space, the solutions $(u,\omega)$ of the problem
\begin{equation*}
 \left\{
	    \begin{array}{ll}
	       -\Delta u+ \varphi_{u} u-\abs{u}^{p-2}u =  \omega u & \quad \text{in }\mathbb R^3, \smallskip\\
	       u>0 &\quad \text{in }\mathbb R^3,\smallskip\\
	       \displaystyle\int_{\mathbb R^3} u^{2} \text{d} x =\rho^2
	    \end{array}
    \right.
\end{equation*}
can be characterized as critical points of the  $C^{1}$ functional in $H^{1}(\mathbb R^3)$
\[
    I(u; \mathbb R^3):= \frac{1}{2}\int_{\mathbb R^3} \abs{\nabla u}^2 \text{d} x+\frac{1}{4}\int_{\mathbb R^3} \varphi_u u^2 \text{d} x-\frac{1}{p}\int_{\mathbb R^3} \abs{u}^p \text{d} x
\]
restricted to the manifold
\[
   \mathcal M_ \rho (\mathbb R^3):= \Big\{ u\in H^1(\mathbb R^3): \int_{\mathbb R^3} u^2 \text{d} x=\rho^2\Big\}, \qquad \rho>0.
\]
In both cases, the Lagrange multiplier associated to the critical point is the parameter  $\omega$
which appears in the related problem.

\medskip

We recall now the following facts that will be fundamental in the whole paper. We limit ourselves to the case $p\in(2,3)$ which is our interest here, even though some facts
 are also true for $p\in (2,10/3)$.

\medskip

{\bf Fact 1.}
For every $\rho >0$, the functional $I(\, \cdot \, ; D)$ is bounded from below and coercive on $\mathcal M_ \rho (D)$. 
This is proved in \cite[Lemma 3.1]{BS1} in the case $D=\mathbb R^{3}$, but
 it is easy to see that it holds on every domain $D$ (see, for instance, \cite[Proposition 3.3]{TMNA}).
 
 \medskip

{\bf Fact 2.}
For every $\rho>0$, it is 
\[
    c_\infty:= \inf_{u\in \mathcal M_ \rho (\mathbb R^{3})}I(u; \mathbb R^{3})\in (-\infty,0).
\]
For a proof see \cite[pp. 2498-2499]{BS2}.
The inequality $c_{\infty}<0$ is strongly based on the fact that the domain is the whole  $\mathbb R^{3}$, since 
suitable scalings are used that are not allowed in a bounded domain. In fact, in a bounded domain the infimum is strictly positive (at least for small $\rho$) as we will show in the Appendix.

In \cite{BS1} it has been proved that there exists $\rho_1=\rho_1(p) >0$ such that for any $\rho\in(0,\rho_1)$, all the minimizing sequences for $c_\infty$ are precompact in $H^1(\mathbb R^3)$ up to translations, and converging to a positive ground state
$\mathfrak w_{\infty}$ in $H^{1}(\mathbb R^{3})$,
which is the one  appearing in  the statement  of Theorem \ref{th:main}, so that
\[
 c_\infty= \min_{u\in \mathcal M_ \rho (\mathbb R^{3})}I(u; \mathbb R^{3})=I(\mathfrak w_{\infty};\mathbb R^{3})\in (-\infty, 0).
\]

More explicitly, if $\{ u_n\} \subset \mathcal M_ \rho (\mathbb R^3)$ is a minimizing sequence for $c_\infty$ and vanishing occurs, then  $u_n \rightharpoonup 0$ and $\|u_{n}\| \nrightarrow 0$ in $H^1 (\mathbb R^3)$, since $c_\infty<0$. Then the well known Lions' lemma \cite{L}   implies that  
   \[
    \sup_{y\in \mathbb R^3} \int_{B_1(y)} u_n^2 \text{d} x \geq \delta >0, \quad \text{for some } \delta >0.
\]
Hence there exist $\{ y_n\} \subset \mathbb R^3$  and $\mathfrak w_\infty \in \mathcal M_\rho (\mathbb R^3)$ such that
    $v_n:=u_n(\cdot +y_n) \in\mathcal M_\rho (\mathbb R^3)$ and  
    \begin{equation*}
    v_n \to \mathfrak w_\infty \quad  \text{ in }H^1(\mathbb R^3)\,, \qquad
    I(\mathfrak w_{\infty} ; \mathbb R^3)=c_\infty.
    \end{equation*}
    Furthermore, $|y_{n}| \to +\infty$. Indeed, since
\[
\int_{B_1} u_n^2(x+y_n) \text{d} x=\int_{B_1(y_n)} u_n^2 \text{d} x \geq \sup_{y\in \mathbb R^3} \int_{B_1(y)} u_n^2 \text{d} x - o_n(1) \geq \frac{\delta}{2} >0,
\]
if the sequence $\{ y_n\}$ were bounded, for a large $ R>0$,
\[
    \int_{B_{{R}}} u_n^2 \text{d} x \geq \int_{B_1(y_n)} u_n^2 \text{d} x \geq \frac{\delta}{2} >0, \quad \text{for every } n\in \mathbb N
\]
which is a contradiction since $u_{n}\to 0$ in $L^{2}(B_{ R})$.

Finally, as proved in  \cite[Theorem 0.1]{GPV}, the ground state $\mathfrak w_{\infty}$ 
is radially symmetric for $\rho \in (0,\rho_1)$, if necessary by reducing $\rho_1$.

\medskip

{\bf Fact 3.} There is a Lagrange multiplier $\omega_\infty$ associated to $\mathfrak w_\infty$, that satisfies
    \begin{equation}\label{lagrange:infinito}
        \rho^2 \omega_\infty=\int_{\mathbb R^3} \abs{\nabla \mathfrak w_\infty}^2 \text{d} x +\int_{\mathbb R^3}\varphi _{\infty} \mathfrak w_\infty^2 \, \text{d} x- \int_{\mathbb R^3} \mathfrak w_\infty^p \text{d} x,
    \end{equation}
    where we have written for simplicity $\varphi_\infty:=\varphi_{\mathfrak w_\infty}$.
    Let us recall the argument which shows that $\omega_{\infty}$ is negative.
   By \cite{GPV} we know that, setting
    \[
        \mathfrak w_\infty=\rho^{\frac{4}{4-3(p-2)}}\mathfrak v(\rho^{\frac{2(p-2)}{4-3(p-2)}}\ \cdot \ ),
    \]
    $\mathfrak v$ is a radial constrained (to the $L^2$-sphere in $H^1(\mathbb R^3)$) minimizer for the functional defined on $H^1(\mathbb R^3)$ by
    \[
    J(u):=\frac{1}{2}\int_{\mathbb R^3}\abs{\nabla u}^2\text{d} x+\frac{\rho^{\alpha(p)}}{4}\int_{\mathbb R^3}\varphi_uu^2\text{d} x-\frac{1}{p}\int_{\mathbb R^3}\abs{u}^p\text{d} x
    \]
    where
    $$ 
   \quad \alpha(p):=\frac{8(3-p)}{10-3p}>0\quad  \text{ if }\quad 2<p<3.
    $$
Then
    \[
        J(\mathfrak v)=\min\big\{J(u):u\in H^1(\mathbb R^3) \text{ and } \abs{u}_{L^2(\mathbb R^3)}=1\big\}
    \]
and $\mathfrak v$ is a solution of the problem
    \[
        \left\{
	       \begin{array}{ll}
	    	  -\Delta v +\rho^{\alpha(p)}\varphi_v v-\abs{v}^{p-2}v =  \omega v & \quad \text{in }\mathbb R^3,\smallskip\\
	           v>0 &\quad \text{in }\mathbb R^3,\smallskip\\
	            \displaystyle \int_{\mathbb R^{3}} v^{2}=1, &
	       \end{array}
    \right.
    \]
with $\omega$ as its  Lagrange multiplier, which evidently  satisfies
    \begin{equation}\label{lagrange:v}
        \omega=\int_{\mathbb R^3} \abs{\nabla \mathfrak v}^2 \text{d} x +\rho^{\alpha(p)}\int_{\mathbb R^3}\varphi _{\mathfrak v} \mathfrak v^2 \, \text{d} x- \int_{\mathbb R^3} \mathfrak v^p \text{d} x.
    \end{equation}
Possibly by reducing  $\rho_{1}$, for $\rho\in(0,\rho_{1})$  it is  $\omega<0$; see \cite[Proposition 1.3]{GPV}.
Note that
\[
\int_{\mathbb R^3} \abs{\nabla \mathfrak w_\infty}^2 \text{d} x=\rho^{\gamma(p)}\int_{\mathbb R^3} \abs{\nabla \mathfrak v}^2 \text{d} x\quad \text{and}\quad \int_{\mathbb R^3} \mathfrak w_\infty^p \text{d} x=\rho^{\gamma(p)}\int_{\mathbb R^3} \mathfrak v^p \text{d} x,\quad
\]
where
\[
     \gamma(p):=\frac{8-2(p-2)}{10-3p}>0\quad \text{if}\quad 2<p<3,
\]
and
 \[
 \int_{\mathbb R^3}\varphi _{\infty} \mathfrak w_\infty^2 \, \text{d} x =\rho^{\alpha(p)+\gamma(p)}\int_{\mathbb R^3}\varphi _{\mathfrak v} \mathfrak v^2 \, \text{d} x.
\]
Thus, by using \eqref{lagrange:infinito} and \eqref{lagrange:v},  we infer that
    \begin{align*}
        \rho^2 \omega_\infty&=\rho^{\gamma(p)}\bigg[\int_{\mathbb R^3} \abs{\nabla \mathfrak v}^2 \text{d} x +\rho^{\alpha(p)}\int_{\mathbb R^3}\varphi _{\mathfrak v} \mathfrak v^2 \, \text{d} x- \int_{\mathbb R^3} \mathfrak v^p \text{d} x\bigg]\\
        &=\rho^{\gamma(p)}\omega.
    \end{align*}
    Therefore, for $\rho\in(0,\rho_{1})$, it is also $\omega_\infty<0.$

    \medskip

{\bf Fact 4.}  Let us consider a smooth bounded domain $D\subset \mathbb R^3$ and $u\in H^1_0(D)$. The unique solution $\phi_u$ to the problem
    \[
        -\Delta \phi= u^2 \text{ in } D,\quad \phi=0 \text{ on } \partial D
    \]
    and the unique solution $\varphi_u$ to the problem
    \[
        -\Delta \varphi= u^2 \text{ in } \mathbb R^3
    \]
    are such that
    \begin{equation}\label{eq:relacion}
        \int_{D}\phi_u u^2\text{d} x=\int_{D}\varphi_u u^2\text{d} x-\int_{D\times D}H(x,y;D)u^2(x)u^2(y)\text{d} x\text{d} y,
    \end{equation}
    where $H(\, \cdot \, ,\, \cdot \,;D)$ denotes the \emph{smooth part} of the Green's function for $-\Delta$ in $D$ (see \cite{Iorio}). This function has the following relevant properties useful for our purpose.
    \begin{enumerate}
    \item The function $H(\, \cdot \, ,\, \cdot \,;D)$ is nonnegative.
     \item We have
        \begin{equation}\label{eq:M}
            M_D:=\abs{H(\, \cdot \, ,\, \cdot \,;D)}_{L^\infty(D\times D)}<+\infty.
        \end{equation}
    \item \label{partesuave} If $\lambda>1$, then
        \[
            H(x,y;\lambda D)=\frac{1}{\lambda}H\bigg(\frac{x}{\lambda},\frac{y}{\lambda};D\bigg),\quad \forall \ (x,y)\in (\lambda D)\times (\lambda \overline D).
        \]
    \item If $D_1\subset D_2\subset \mathbb R^3$ are smooth bounded domains, then
        \begin{equation}\label{partesuave2}
            H(x,y;D_2)<H(x,y;D_1),\quad \forall \ (x,y)\in D_1\times \overline D_1.
        \end{equation}
    \end{enumerate}
    As two important consequences, we highlight the following:
    \begin{enumerate}
    \item[i)] Given a family $\{u_\lambda\}_{\lambda > 1}$ of functions such that $u_\lambda \in \mathcal M_\rho(\lambda D)$, using the property (\ref{partesuave}) of the function $H$, we have  the estimate
    \begin{equation}\label{estimativa}
        0\leq \int_{\lambda D\times \lambda D} H(x,y; \lambda D)u_\lambda^2(x)u_\lambda^2(y)\text{d} x\text{d} y\leq \\ \frac{M_D}{\lambda} \bigg(\int_{\lambda D}u_\lambda^2(x)\text{d} x\bigg)^2 =\frac{M_D}{\lambda} \rho^4.
    \end{equation}
    \item[ii)] If $D_1\subset D_2\subset \mathbb R^3$ are smooth bounded domains, using the trivial extension of functions, if $u\in H^1_0(D_1)$, then $u\in H^1_0(D_2)$, and
    \[
        \int_{D_1}\phi_1 u^2< \int_{D_2}\phi_2 u^2,
    \]
    where $\phi_i$ for $i\in \{1,2\}$ denotes the unique solution of the problem
    \[
        -\Delta \phi= u^2 \text{ in } D_i,\quad \phi=0 \text{ on } \partial D_i.
    \]
    Therefore
    \(
        I(u;D_1)<I(u;D_2).
    \)
    \end{enumerate}

\medskip

\section{The role of large \texorpdfstring{$\lambda$}{lambda}}\label{sec:bary}

In this section we consider some interesting and fundamental results which are true
specifically for expanding domains.
All the results involve a limit in $\lambda$ and the role of taking a large $\lambda$ is then evident.

\medskip

Let us start with few notations.
For $u\in H^1(\mathbb R^3)$ with compact support we define the barycenter map by
\[
    \beta (u):=\frac{\displaystyle\int_{\mathbb R^3} x\abs{\nabla u}^2\text{d} x}{\displaystyle\int_{\mathbb R^3} \abs{\nabla u}^2\text{d} x}\in \mathbb R^{3}.
\]
It is a continuous map.\\
For $x\in \mathbb R^3$ and  $0<\widehat r<\widehat R$ , $A_{\widehat R,\widehat r,x} := B_{\widehat R}(x)\setminus \overline{B_{\widehat r}(x)}$
is the annulus
centered in $x$ and radii $\widehat r,\widehat R$; for $\lambda>1$ and $x\neq 0$, it holds that $\lambda A_{\widehat R,\widehat r,x}= A_{\lambda \widehat R,\lambda \widehat r,\lambda x}$, from which
$\lambda A_{\widehat R,\widehat r,x}\neq A_{\lambda \widehat R,\lambda \widehat r,x}$.
If $x=0$, we simply write $A_{\widehat R,\widehat r}$.
 For $\lambda >1$ and $R\in (r,+\infty)$, where $r$ is the fixed number in the beginning of Section \ref{sec:prelim}, the set 
\[
    \Big\{  u\in \mathcal M_\rho (A_{\lambda R,\lambda r,x}):\ \beta(u)=x\Big\} 
\]
is not empty and 
\[
    a( R, r, \lambda,x):= \inf \Big\{ I(u; A_{\lambda R,\lambda r,x}): u\in \mathcal M_\rho (A_{\lambda R,\lambda r,x}),\ \beta(u)=x\Big\}
    >-\infty.
\]

Note that the above infimum does not depend on the choice of the point $x$; indeed, for any $x\in \mathbb R^3$ and $0<\widehat r<\widehat R$, we have (see Fact 4)
\[
    H(y,z;A_{R,r,x})=H(y-x,z-x;A_{R,r}),\quad \forall \ (y,z)\in A_{R,r,x}\times \overline A_{R,r,x}
\]
and then every term in the functional is invariant under translations. Let us set $a( R, r, \lambda):=a( R, r, \lambda,0)$. We also use the notations
\[
    b_\lambda:=\inf_{u\in \mathcal M_ \rho (B_{\lambda r})}I(u; B_{\lambda r})\qquad \text{and}\qquad c_\lambda:=\inf_{u\in \mathcal M_ \rho (\lambda \Omega)}I(u; \lambda \Omega).
\]
Of course, the numbers $a( R, r, \lambda)$, $b_\lambda$ and $c_\lambda$ (besides $c_\infty$) also depend on $\rho>0$. However, we do not make explicit this dependence in the notation.

\medskip

Now we use the fact that the families of infima with fixed $L^2$-norm are bounded from below. Let us choose $\lambda> 1$ and consider a smooth bounded domain $D\subset \mathbb R^3$; if $u \in \mathcal M_\rho(\lambda D)$ (so that $u \in \mathcal M_\rho(\mathbb R^3)$), then there exists $m<0$ such that
\[
    m\leq \int_{\mathbb R^3} \bigg(\frac{1}{2}\abs{\nabla u}^2-\frac{1}{p}\abs{u}^p\bigg)\text{d} x=\int_{\lambda D} \bigg(\frac{1}{2}\abs{\nabla u}^2-\frac{1}{p}\abs{u}^p\bigg)\text{d} x<I(u; \lambda D);
\]
see \cite[Lemma 3.1]{BS1}.
Therefore,
 for the different kinds of domains in which we are interested, i.e., $D=B_r$, $D=\Omega$ and $D=A_{R,r}$, it is 
$    m\leq b_\lambda, m\leq c_\lambda $ and $m\leq a(R,r,\lambda)$  for all $\lambda>1$.
As a consequence,
\[
    m\leq \liminf_{\lambda \to +\infty} b_\lambda,\quad m\leq \liminf_{\lambda \to +\infty} c_\lambda\quad \text{and}\quad m\leq \liminf_{\lambda \to +\infty}a(R,r,\lambda).
\]

\begin{remark}
    For a smooth bounded domain $D\subset \mathbb R^3$ and $u\in \mathcal M_\rho (D)$, we have by \eqref{eq:relacion}
        \begin{align*}
            I(u;D)&=\int_{D} \bigg(\frac{1}{2}\abs{\nabla u}^2-\frac{1}{p}\abs{u}^p\bigg) \emph{d} x+\frac{1}{4}\int_{D} \phi_u u^2 \emph{d} x\\
            &= \int_{D}\bigg(\frac{1}{2}\abs{\nabla u}^2+\frac{1}{4}\varphi_u u^2-\frac{1}{p}\abs{u}^p\bigg) \emph{d} x-\frac{1}{4}\int_{D\times D} H(x,y; D)u^2(x)u^2(y)\emph{d} x\emph{d} y,
        \end{align*}
    from which
    \begin{equation}\label{relacion}
    I(u;D)=I(u;\mathbb R^3)-\frac{1}{4}\int_{D\times D} H(x,y; D)u^2(x)u^2(y)\emph{d} x\emph{d} y.
    \end{equation}
\end{remark}

    Using the previous remark, we obtain the following inequalities involving the infima:
    \begin{equation}\label{inferiores}
        c_\infty \leq \liminf_{\lambda \to +\infty} a(R,r,\lambda),\quad c_\infty \leq \liminf_{\lambda \to +\infty} b_\lambda\quad \text{and} \quad c_\infty \leq \liminf_{\lambda \to +\infty} c_\lambda.
    \end{equation}
    Indeed, by definition there are  families $\{u_\lambda\}_{\lambda>1}$, $\{v_\lambda\}_{\lambda>1}$ and $\{w_\lambda\}_{\lambda>1}$  such that, for every $\lambda > 1$, $u_\lambda \in \mathcal M_\rho(A_{\lambda R,\lambda r})$,  $v_\lambda \in \mathcal M_\rho(B_{\lambda r})$, $w_\lambda \in \mathcal M_\rho(\lambda \Omega)$ (hence $u_\lambda, v_\lambda, w_\lambda \in \mathcal M_\rho(\mathbb R^3)$) and for which the inequalities
    \[
        I(u_\lambda;A_{\lambda R,\lambda r})<a(R,r,\lambda)+\frac{1}{\lambda}, \quad I(v_\lambda;B_{\lambda r})<b_\lambda+\frac{1}{\lambda},\quad I(w_\lambda;\lambda \Omega)<c_\lambda+\frac{1}{\lambda}
    \]
    hold. But then, using \eqref{relacion} with $D=A_{\lambda R, \lambda r}$, $D=B_{\lambda r}$ and $D=\lambda \Omega$, respectively,
    
    \begin{align*}
        c_\infty &\leq I(u_\lambda;\mathbb R^3)< a(R,r,\lambda)+\frac{1}{\lambda}+\frac{1}{4}\int_{A_{\lambda R,\lambda r}\, \times \, A_{\lambda R,\lambda r}} H(x,y; A_{\lambda R,\lambda r})u_\lambda^2(x)u_\lambda^2(y)\emph{d} x\emph{d} y,\\
        c_\infty &\leq I(v_\lambda;\mathbb R^3)< b_\lambda+\frac{1}{\lambda}+\frac{1}{4}\int_{B_{\lambda r}\times B_{\lambda r}} H(x,y; B_{\lambda r})v_\lambda^2(x)v_\lambda^2(y)\emph{d} x\emph{d} y,\\
        c_\infty &\leq I(w_\lambda;\mathbb R^3)< c_\lambda+\frac{1}{\lambda}+\frac{1}{4}\int_{\lambda \Omega\times \lambda \Omega} H(x,y; \lambda \Omega)w_\lambda^2(x)w_\lambda^2(y)\emph{d} x\emph{d} y,
    \end{align*}
and the conclusion easily follows by using \eqref{estimativa}.

The next two propositions will serve as a preparation for the main result of this section, Proposition \ref{propII_4.1}. Recall that $\rho_1>0$ was previously introduced in Section \ref{sec:prelim}.
\begin{prop}\label{propII_2.1}Let $2<p<3$. For any  $\rho\in(0,\rho_1)$ it is
    \[
        \liminf_{\lambda \to +\infty} a( R, r, \lambda) > c_\infty.
    \]
\end{prop}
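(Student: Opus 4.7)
The plan is to argue by contradiction, using the fact that \eqref{inferiores} already provides $c_{\infty}\leq \liminf_{\lambda\to+\infty}a(R,r,\lambda)$, so only strict inequality needs to be excluded. Suppose therefore that equality holds, pick $\lambda_{n}\to+\infty$ with $a(R,r,\lambda_{n})\to c_{\infty}$, and for each $n$ select $u_{n}\in\mathcal{M}_{\rho}(A_{\lambda_{n}R,\lambda_{n}r})$ with $\beta(u_{n})=0$ and $I(u_{n};A_{\lambda_{n}R,\lambda_{n}r})\leq a(R,r,\lambda_{n})+1/n$. Extending $u_{n}$ trivially to $H^{1}(\mathbb{R}^{3})$, the identity \eqref{relacion} together with the estimate \eqref{estimativa} shows that the difference $I(u_{n};\mathbb{R}^{3})-I(u_{n};A_{\lambda_{n}R,\lambda_{n}r})$ is $O(1/\lambda_{n})$, so $I(u_{n};\mathbb{R}^{3})\to c_{\infty}$. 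Hence $\{u_{n}\}$ is a minimizing sequence for $c_{\infty}$ on $\mathcal{M}_{\rho}(\mathbb{R}^{3})$, and Fact 2 furnishes $\{y_{n}\}\subset\mathbb{R}^{3}$ with $v_{n}:=u_{n}(\,\cdot\,+y_{n})\to\mathfrak{w}_{\infty}$ strongly in $H^{1}(\mathbb{R}^{3})$, where $\mathfrak{w}_{\infty}$ is the positive radial ground state.

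Next I locate the translates $y_{n}$. Since $v_{n}\to\mathfrak{w}_{\infty}$ in $L^{2}_{\text{loc}}(\mathbb{R}^{3})$ and $\mathfrak{w}_{\infty}>0$, for any fixed $K>0$ the support of $v_{n}$ meets $B_{K}(0)$ for $n$ large; picking any such point $z_{n}^{*}$, the inclusion $z_{n}^{*}+y_{n}\in A_{\lambda_{n}R,\lambda_{n}r}$ and the triangle inequality give
\[
\lambda_{n}r-K\leq |y_{n}|\leq \lambda_{n}R+K.
\]
In particular $|y_{n}|\to+\infty$ at rate $\lambda_{n}$, and the support of $v_{n}$ lies in $\{|z|\leq 2\lambda_{n}R+K\}$.

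To reach a contradiction I exploit the barycenter identity. Writing $\beta(u_{n})=0$ and changing variables $x=z+y_{n}$ yields
\[
y_{n}\,\|\nabla v_{n}\|_{L^{2}(\mathbb{R}^{3})}^{2}=-\int_{\mathbb{R}^{3}}z\,|\nabla v_{n}(z)|^{2}\,\text{d} z.
\]
Given $\varepsilon>0$, choose $K$ so large that $\int_{|z|>K}|\nabla\mathfrak{w}_{\infty}|^{2}\text{d} z<\varepsilon/2$; by $H^{1}$-convergence, $\int_{|z|>K}|\nabla v_{n}|^{2}\text{d} z<\varepsilon$ for $n$ large. Splitting the moment integral at $|z|=K$ and using the support bound $|z|\leq 2\lambda_{n}R+K$ on $\{v_{n}\neq 0\}$ gives
\[
|y_{n}|\,\|\nabla v_{n}\|_{L^{2}}^{2}\leq K\|\nabla v_{n}\|_{L^{2}}^{2}+(2\lambda_{n}R+K)\varepsilon.
\]
Dividing by $\lambda_{n}\|\nabla v_{n}\|_{L^{2}}^{2}$, using $|y_{n}|/\lambda_{n}\geq r-K/\lambda_{n}$ and $\|\nabla v_{n}\|_{L^{2}}^{2}\to\|\nabla\mathfrak{w}_{\infty}\|_{L^{2}}^{2}>0$, then letting $n\to+\infty$ followed by $\varepsilon\to 0$, one arrives at $r\leq 0$, contradicting $r>0$.

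The main obstacle is the quantitative control of the weighted integral $\int z\,|\nabla v_{n}|^{2}\text{d} z$, which is not provided by $H^{1}$-convergence alone. The resolution combines the annulus-induced support confinement, which gives $|z|=O(\lambda_{n})$ on $\{v_{n}\neq 0\}$, with the uniform tail smallness of $|\nabla v_{n}|^{2}$ inherited from the strong $H^{1}$-limit. Together they make the ``far'' contribution grow only like $\varepsilon\lambda_{n}$, which can be absorbed by the left-hand side $|y_{n}|\sim\lambda_{n}r$ once $\varepsilon$ is chosen small with respect to $r/R$.
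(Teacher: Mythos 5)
Your proposal is correct and follows essentially the same strategy as the paper: argue by contradiction, use Fact 2 to obtain translates $v_n=u_n(\cdot+y_n)\to\mathfrak w_\infty$ strongly in $H^1(\mathbb R^3)$, and then derive a contradiction from $\beta(u_n)=0$ by comparing the gradient mass concentrated at distance $\sim\lambda_n r$ from the origin against the $O(\lambda_n R)$ support bound. The only (harmless) cosmetic differences are that you split the moment integral at a fixed radius $K$ in the translated variable with an $\varepsilon$-tail, whereas the paper splits the annulus into $B_{\lambda_n r/2}(y_n)$ and its complement (Claims 1 and 2) and normalizes $y_n$ to the negative $x^1$-axis by rotational invariance.
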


\begin{proof}
Assume by contradiction that, along a subsequence $\lambda_n \to +\infty$ it holds
    \[
        \lim_{\lambda \to +\infty} a(R,r,\lambda_n)=c_\infty.
    \]
Besides, consider a sequence $\{u_n\}$ with $u_n\in \mathcal M_{\rho}(A_{\lambda_n R,\lambda_n r})$, such that $\beta(u_n)=0$ and it also satisfies
    \[
        a(R,r,\lambda_n)\leq I(u_n;A_{\lambda_n R,\lambda_n r}) <a(R,r,\lambda_n) + o_n(1).
    \]
    Hence, using \eqref{relacion} with $D=A_{\lambda_n R,\lambda_n r}$, we have
    \[
        a(R,r,\lambda_n)+o_n(1)\leq I(u_n;\mathbb R^3) <a(R,r,\lambda_n) + o_n(1)
    \]
    so that, by our assumption,
    \[
        I(u_n;\mathbb R^3) \to c_\infty.
    \]
Thus, we may suppose that 
    \begin{equation}\label{eq:conv}
    \{ u_n\} \subset \mathcal M_\rho (\mathbb R^3),\quad \beta(u_n)=0 \ \text{ and } \ I(u_n; \mathbb R^3) 
    \to c_\infty.
    \end{equation}
Due to the coercivity of $I(\, \cdot \, , \mathbb R^{3})$,  the sequence $\{u_n\}$
has to be bounded. Moreover, $\norm{u_n} \nrightarrow 0$; otherwise, by using \eqref{eq:convergencia},
    \[
        \int_{\mathbb R^3} \abs{\nabla u_n}^2 \text{d} x,\ \int_{\mathbb R^3} \varphi_{u_n}u_n^2 \text{d} x,\ \int_{\mathbb R^3} \abs{u_n}^p 
        \text{d} x \to 0
    \]
 and    by \eqref{eq:conv} 
    \[
c_\infty +      o_{n}(1)=I(u_n; \mathbb R^3)= \frac{1}{2}\int_{\mathbb R^3} \abs{\nabla u_n}^2 \text{d} x+\frac{1}{4}\int_{\mathbb R^3} \varphi_{u_n}u_n^2 \text{d} x-\frac{1}{p}\int_{\mathbb R^3} \abs{u_n}^p \text{d} x= o_n(1),
    \]
we obtain $c_\infty=0$, in contradiction with $c_{\infty}<0$. Hence, as in Fact 2 of Section \ref{sec:prelim}, 
 there exists a sequence $\{ y_n\} \subset \mathbb R^3$ with $|y_{n}|\to +\infty$ such that $v_n:=u_n(\cdot +y_n) \in \mathcal M_\rho (\mathbb R^3)$ and
    \begin{equation}\label{eq:convergence2}
    v_n \to \mathfrak w_\infty \quad  \text{ in }H^1(\mathbb R^3)\,, \qquad
    I(\mathfrak w_{\infty} ; \mathbb R^3)=c_\infty.
    \end{equation}
The convergence in \eqref{eq:convergence2} can equivalently be written as
$w_n:= u_n(\cdot +y_n)-\mathfrak w_\infty\to 0$; i.e.,
    \[
        u_n(x)=w_n(x-y_n)+\mathfrak w_\infty (x-y_n)\quad \text{ with } \ w_{n}\to 0 \ \text{ in } H^{1}(\mathbb R^{3}).
    \]
Since $I(\, \cdot \, ;A_{\lambda_n R,\lambda_n r})$ is rotationally invariant, we can assume that
    \[
        y_n=(y_n^1,0, 0)\quad \text{with} \quad y_n^1<0.
    \]

\medskip

{\bf Claim 1: } $ \displaystyle\int_{B_{\lambda_n r/2}(y_n)}\abs{\nabla u_n}^2\text{d} x \to \int_{\mathbb R^3}\abs{\nabla \mathfrak w_\infty}^2 \text{d} x$.

\medskip

 In fact, denoting with
    \begin{align*}
        \sigma_{1,n}&:=\int_{B_{\lambda_n r/2}(y_n)}\abs{\nabla u_n}^2\text{d} x =\int_{B_{\lambda_n r/2}(y_n)}\abs{\nabla(w_n(x-y_n)+\mathfrak w_\infty (x-y_n))}^2 \text{d} x\,,\\
        \sigma_{2,n}&:=\int_{B_{\lambda_n r/2}(y_n)} 2\abs{\nabla w_n(x-y_n)} \abs{\nabla \mathfrak w_\infty(x-y_n)}\text{d} x\,,\\
        \sigma_{3,n}&:=\int_{B_{\lambda_n r/2}(y_n)}\abs{\nabla w_n(x-y_n)}^2\text{d} x\,, \\
        \sigma_{4,n}&:=\int_{B_{\lambda_n r/2}(y_n)}\abs{\nabla \mathfrak w_\infty(x-y_n)}^2\text{d} x\,,
    \end{align*}
    we have
\begin{equation}\label{eq:abcd}
        \sigma_{3,n}-\sigma_{2,n}+\sigma_{4,n} \leq \sigma_{1,n} \leq \sigma_{3,n}+\sigma_{2,n}+\sigma_{4,n}.
\end{equation}

With the change of variable  $z:=x-y_n$ we easily deduce that
    \begin{align*}
        \sigma_{2,n}&=\int_{B_{\lambda_n r/2}} 2\abs{\nabla w_n}\abs{\nabla \mathfrak w_\infty}\text{d} z \leq 2\norm{w_n}_{H^{1}
        (\mathbb R^{3})}\Big( \int _{\mathbb R^3}\abs{\nabla \mathfrak w_\infty}^2\text{d} x\Big) ^{1/2}\to 0,\\
        \sigma_{3,n}&=\int_{B_{\lambda_n r/2}} \abs{\nabla w_n}^2\text{d} z \leq \norm{w_n}_{H^{1}(\mathbb R^{3})}^2\to 0,\\
        \sigma_{4,n}&=\int_{B_{\lambda_n r/2}} \abs{\nabla \mathfrak w_\infty}^2\text{d} z\to \int _{\mathbb R^3}\abs{\nabla \mathfrak w_\infty}^2\text{d} x.
    \end{align*}
    Thus, by \eqref{eq:abcd}, we get the claim. On the other hand, setting  $\Theta _n:=  A_{\lambda_n R,\lambda_n r} \cap B_{\lambda_n r/2}(y_n)$, we have
    \[
        \int_{B_{\lambda_n r/2}(y_n)} \abs{\nabla u_n}^2\text{d} x=\int_{\Theta _n} \abs{\nabla u_n}^2\text{d} x+\int_{B_{\lambda_n r/2}(y_n)\setminus \Theta _n} \abs{\nabla u_n}^2\text{d} x.
    \]
    But $B_{\lambda_n r/2}(y_n)\setminus \Theta _n=B_{\lambda_n r/2}(y_n)\setminus A_{\lambda_n R,\lambda_n r}$ and, being
$\textrm{supp}\ u_n\subset A_{\lambda_n R,\lambda_n r}$, from Claim 1
\[
    \displaystyle\int_{B_{\lambda_n r/2}(y_n)\setminus \Theta _n} \abs{\nabla u_n}^2\text{d} x=0
\]
holds. As a consequence, $\displaystyle\int_{\Theta _n} \abs{\nabla u_n}^2\text{d} x \to \int _{\mathbb R^3}\abs{\nabla \mathfrak w_\infty}^2\text{d} x$.
In virtue of this we have

{\bf Claim 2: } $   \displaystyle   \int_{\Upsilon _n} \abs{\nabla u_n}^2\text{d} x \to 0$, where $\Upsilon _n := A_{\lambda_n R,\lambda_n r} \setminus B_{\lambda_n r/2}(y_n).$

    \medskip
    
    Indeed, the inequality
    \begin{multline*}
        \int_{A_{\lambda_n R,\lambda_n r}} \abs{\nabla u_n}^2\text{d} x \leq \int_{A_{\lambda_n R,\lambda_n r}}\abs{\nabla w_n(x-y_n)}^2\text{d} x+\int_{A_{\lambda_n R,\lambda_n r}} 2\abs{\nabla w_n(x-y_n)} \abs{\nabla \mathfrak w_\infty(x-y_n)}\text{d} x \\+\int_{A_{\lambda_n R,\lambda_n r}}\abs{\nabla \mathfrak w_\infty (x-y_n)}^2 \text{d} x
    \end{multline*}
    can be rewritten as
    \begin{multline*}
    \int_{A_{\lambda_n R,\lambda_n r}} \abs{\nabla u_n}^2\text{d} x \leq \int_{\mathbb R^3}\abs{\nabla w_n(x-y_n)}^2\text{d} x+\int_{\mathbb R^3} 2\abs{\nabla w_n(x-y_n)} \abs{\nabla \mathfrak w_\infty(x-y_n)}\text{d} x \\+\int_{\mathbb R^3}\abs{\nabla \mathfrak w_\infty (x-y_n)}^2 \text{d} x.
    \end{multline*}
    Hence, due to the invariance under translations of the integrals,
    \begin{align*}
        \int_{A_{\lambda_n R,\lambda_n r}} \abs{\nabla u_n}^2\text{d} x &\leq \norm{w_n}_{H^1(\mathbb R^3)}^2+2\norm{w_n}_{H^{1}
        (\mathbb R^{3})}\Big( \int _{\mathbb R^3}\abs{\nabla \mathfrak w_\infty}^2\text{d} x\Big) ^{1/2}+\int_{\mathbb R^3}\abs{\nabla\mathfrak w_\infty }^2 \text{d} x\\
        &=o_{n}(1)+\int_{\mathbb R^3}\abs{\nabla\mathfrak w_\infty }^2 \text{d} x,
    \end{align*}
    from which
    \[
        \limsup_{n \to \infty}\int_{A_{\lambda_n R,\lambda_n r}} \abs{\nabla u_n}^2\text{d} x\leq \int_{\mathbb R^3}\abs{\nabla\mathfrak w_\infty }^2 \text{d} x.
    \]
    On the other hand,
    \[
        \int_{A_{\lambda_n R,\lambda_n r}} \abs{\nabla u_n}^2\text{d} x=\int_{\Upsilon_n} \abs{\nabla u_n}^2\text{d} x+\int_{\Theta_n} \abs{\nabla u_n}^2\text{d} x\geq \int_{\Theta_n} \abs{\nabla u_n}^2\text{d} x,
    \]
    so that
    \[
        \liminf_{n \to \infty}\int_{A_{\lambda_n R,\lambda_n r}} \abs{\nabla u_n}^2\text{d} x\geq \int_{\mathbb R^3}\abs{\nabla\mathfrak w_\infty }^2 \text{d} x.
    \]
    As a consequence,
    \[
        \lim_{n \to \infty}\int_{A_{\lambda_n R,\lambda_n r}} \abs{\nabla u_n}^2\text{d} x= \int_{\mathbb R^3}\abs{\nabla\mathfrak w_\infty }^2 \text{d} x
    \]
    and from
    \[
        \int_{\Upsilon_n} \abs{\nabla u_n}^2\text{d} x=\int_{A_{\lambda_n R,\lambda_n r}} \abs{\nabla u_n}^2\text{d} x-\int_{\Theta_n} \abs{\nabla u_n}^2\text{d} x
    \]
    we have
    \[
    \int_{\Upsilon _n} \abs{\nabla u_n}^2\text{d} x \to 0,
    \]
    proving the claim.

    \medskip

Finally, since
    \begin{align*}0 &= \beta(u_n)=\int_{A_{\lambda_n R,\lambda_n r}} x^1\abs{\nabla u_n}^2\text{d} x= \int_{\Theta _n} x^1\abs{\nabla u_n}^2\text{d} x+\int_{\Upsilon _n} x^1\abs{\nabla u_n}^2\text{d} x\\
        &< -\frac{\lambda _n r}{2}\bigg(\int_{\mathbb R^3}\abs{\nabla\mathfrak w_\infty }^2 \text{d} x+o_n(1)\bigg)+\lambda _nR \int_{\Upsilon _n} \abs{\nabla u_n}^2\text{d} x
    \end{align*}
we deduce that
\[
    \frac{r}{2R}\int_{\mathbb R^3}\abs{\nabla\mathfrak w_\infty }^2 \text{d} x+o_n(1)<\displaystyle\int_{\Upsilon _n} \abs{\nabla u_n}^2\text{d} x,
\]
in contradiction with Claim 2.
    The proof is thereby completed.
\end{proof}

To prove  Theorem \ref{th:main},  we will need to deal with radial functions. Since it is not clear
if $b_\lambda$ is achieved on a radial minimizer
(see e.g., \cite[Theorem 1.7]{Ruiz}) we introduce the radial setting. For any $\rho>0$, $\lambda>1$, let
\begin{eqnarray*}
\mathcal M_{\rho }^*(B_{\lambda r})&:=& \Big\{u\in \mathcal M_\rho (B_{\lambda r}) :u\text{ is radial} \Big\}\\
&=&\Big\{ u\in H^{1}_{0}(B_{\lambda r}): |u|_{L^2(B_{\lambda r})} = \rho, \ u \textrm{ is radial}\Big\}
\end{eqnarray*}
and
\begin{equation}\label{eq:brad}
b_{\lambda}^*:=\inf_{u\in \mathcal M_{\rho }^{*}(B_{\lambda r})} I(u; B_{\lambda r}).
\end{equation}
By using the arguments of \cite{BF,ORL}, it is easy to see that $b_{\lambda}^{*}$ is achieved on a function that we denoted by $\mathfrak w_{B_{\lambda r}}^{*}$.

\begin{prop}\label{propII_3.1}
Let $2<p<3$. For any  $\rho\in(0,\rho_1)$, it is
    \[
        \lim_{\lambda \to \infty} b_{\lambda}
        = \lim_{\lambda \to \infty} b_{\lambda}^{*}
        =\lim_{\lambda \to \infty} c_\lambda 
        = c_\infty.
    \]
\end{prop}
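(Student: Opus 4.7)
The plan is to sandwich each of $b_\lambda$, $b_\lambda^*$ and $c_\lambda$ between $c_\infty$ from below and $c_\infty$ from above. The lower bounds are essentially already in hand: the inequalities \eqref{inferiores} provide $c_\infty \le \liminf_{\lambda\to+\infty} b_\lambda$ and $c_\infty \le \liminf_{\lambda\to+\infty} c_\lambda$, while the trivial inclusion $\mathcal M_\rho^*(B_{\lambda r}) \subset \mathcal M_\rho(B_{\lambda r})$ gives $b_\lambda \le b_\lambda^*$, hence also $c_\infty \le \liminf_{\lambda\to+\infty} b_\lambda^*$. The real content of the proposition therefore lies in the three $\limsup$ inequalities, and I would prove all three at once using a single common test function built from the radial ground state $\mathfrak w_\infty$ furnished by Fact~2.

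Concretely, fix a radial cutoff $\eta_\lambda\in C_c^\infty(\mathbb R^3)$ with $\eta_\lambda\equiv 1$ on $B_{\lambda r/2}$, $\mathrm{supp}\,\eta_\lambda\subset B_{\lambda r}$, and $|\nabla \eta_\lambda|_\infty\le C/(\lambda r)$, and set
\[
u_\lambda:=\rho\,\frac{\eta_\lambda \mathfrak w_\infty}{|\eta_\lambda \mathfrak w_\infty|_{L^2(\mathbb R^3)}}.
\]
Because $\mathfrak w_\infty\in H^1(\mathbb R^3)$ is radial (Fact~2), a standard dominated-convergence argument controlling the tail $|\mathfrak w_\infty|_{L^2(\mathbb R^3\setminus B_{\lambda r/2})}\to 0$ together with $|\mathfrak w_\infty\nabla\eta_\lambda|_{L^2}\le (C/(\lambda r))\rho\to 0$ yields $\eta_\lambda\mathfrak w_\infty\to \mathfrak w_\infty$ in $H^1(\mathbb R^3)$, so that after renormalisation $u_\lambda\to \mathfrak w_\infty$ in $H^1(\mathbb R^3)$. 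Moreover $u_\lambda$ is radial, supported in $B_{\lambda r}$, and has $L^2$-norm $\rho$, so through trivial extension it belongs simultaneously to $\mathcal M_\rho^*(B_{\lambda r})\subset \mathcal M_\rho(B_{\lambda r})\subset \mathcal M_\rho(\lambda \Omega)$, making it admissible for all three infima.

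To estimate the three energies I would apply the identity \eqref{relacion} with $D=B_{\lambda r}$ and with $D=\lambda\Omega$, bounding the nonnegative Green-function correction via the uniform estimate \eqref{estimativa}. The scaling property \eqref{partesuave} of the smooth part of the Green function forces this correction to shrink like $M_{B_r}\rho^4/\lambda$ in the ball case and like $M_\Omega\rho^4/\lambda$ in the $\lambda\Omega$ case, so
\[
I(u_\lambda;B_{\lambda r}) = I(u_\lambda;\mathbb R^3) + o_\lambda(1), \qquad I(u_\lambda;\lambda \Omega) = I(u_\lambda;\mathbb R^3) + o_\lambda(1).
\]
Continuity of $I(\,\cdot\,;\mathbb R^3)$ on $H^1(\mathbb R^3)$ combined with $u_\lambda\to \mathfrak w_\infty$ gives $I(u_\lambda;\mathbb R^3)\to I(\mathfrak w_\infty;\mathbb R^3)=c_\infty$. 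Chaining with admissibility,
\[
b_\lambda\le b_\lambda^*\le I(u_\lambda;B_{\lambda r}) \to c_\infty, \qquad c_\lambda\le I(u_\lambda;\lambda\Omega)\to c_\infty,
\]
which supplies the missing $\limsup$ bounds and closes the sandwich.

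The main obstacle I anticipate is not conceptual but a matter of bookkeeping: one must verify that the single test function $u_\lambda$ genuinely works for the radial infimum $b_\lambda^*$, the unrestricted infimum $b_\lambda$, and the $\lambda\Omega$ infimum $c_\lambda$ simultaneously. Radiality is inherited from $\mathfrak w_\infty$ and $\eta_\lambda$, while the key quantitative input is the $\lambda^{-1}$ scaling of the smooth part of the Green function in \eqref{partesuave}, which uniformly kills the domain-dependent correction as $\lambda\to+\infty$ no matter whether the ambient domain is a ball, an annulus or the expanding set $\lambda\Omega$.
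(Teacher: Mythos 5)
Your argument is correct and takes essentially the same route as the paper: both truncate the radial ground state $\mathfrak w_\infty$, renormalise to the $L^2$-sphere, use \eqref{relacion} together with \eqref{estimativa} (or just the nonnegativity of $H$) to compare $I(\cdot\,;B_{\lambda r})$ and $I(\cdot\,;\lambda\Omega)$ with $I(\cdot\,;\mathbb R^3)$, and close the sandwich with the lower bounds \eqref{inferiores} plus $b_\lambda\le b_\lambda^*$. The only cosmetic difference is that the paper fixes the truncation radius $T$, sends $\lambda\to+\infty$ to get the bound $I(t_Tw_T;\mathbb R^3)$, and only then lets $T\to+\infty$, whereas you tie the cutoff radius to $\lambda$ in a single diagonal family; both versions work.
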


\begin{proof}
Let $\mathfrak w_\infty$ be a positive radial ground state solution of the problem in the whole space $\mathbb R^{3}$,
    and $h: [0,+\infty) \to [0,1]$ a decreasing, $C^\infty$ function such that
    \[
        h(t):=
        \begin{cases}
            1,& t \leq 1,\\
            0,& t \geq 2.
        \end{cases}
    \]
    For  $T\geq r/2$  consider the function $h_T \in C_0^\infty (\mathbb R^3)$ given by $h _T (x):=h(\abs{x}/T)$, and
    define $w_T := \mathfrak w_\infty h_T$. Note that $w_T \to \mathfrak w_\infty$ in $H^1(\mathbb R^3)$, when $T\to +\infty$. Also let $t_T>0$ be such that $t_T w_T \in \mathcal M_\rho (B_{2T})$.

    After fixing $T\geq r/2$, the number ${\widehat \lambda}={\widehat \lambda}(T):=2T/r\geq 1$ is such that for every $\lambda \geq {\widehat \lambda}$, $B_{2T} \subset B_{\lambda r}\subset \lambda \Omega$. 
    Then for every $\lambda \geq {\widehat \lambda}$, denoting $\phi_T:=\phi_{t_Tw_T}$, by using \eqref{relacion} with $D=B_{\lambda r}$ and $D=\lambda \Omega$, we have
    \begin{align*}
        b_{\lambda}\leq b_{\lambda}^{*}  &\leq I(t_T w_T; B_{\lambda r})\\ 
        &=I(t_T w_T; \mathbb R^3)-\frac{t_T^4}{4\lambda}\int_{B_{\lambda r}\times B_{\lambda r}} H\bigg(\frac{x}{\lambda},\frac{y}{\lambda}; B _{r}\bigg)w_T^2(x)w_T^2(y)\text{d} x\text{d} y
    \end{align*}
    and
    \begin{align*}
        c_{\lambda} &\leq I(t_T w_T; \lambda \Omega)\\
        &=I(t_T w_T; \mathbb R^3)-\frac{t_T^4}{4\lambda}\int_{\lambda \Omega \times \lambda \Omega} H\bigg(\frac{x}{\lambda},\frac{y}{\lambda}; \Omega \bigg)w_T^2(x)w_T^2(y)\text{d} x\text{d} y.
    \end{align*}
    Therefore, using \eqref{estimativa} with $D=B_{r}$ and $D=\Omega$ respectively,
    \begin{equation}\label{II_4.1}
        \limsup_{\lambda \to +\infty} b_{\lambda}\leq \limsup_{\lambda \to +\infty} b_{\lambda}^{*} \leq I(t_T w_T; \mathbb R^3)\ \text{ and }\ \limsup_{\lambda \to +\infty} c_{\lambda} \leq I(t_T w_T; \mathbb R^3),\quad  \forall \,  T\geq r/2.
    \end{equation}
    Since $w_T \to \mathfrak w_\infty$ in $L^2(\mathbb R^3)$, by the definition of $t_T$ we have 
    \[
        t_T^2= \frac{\rho ^2}{\displaystyle \int_{\mathbb R^3} w_T^2\ \text{d} x} \to 1.
    \]
   It follows that $t_T\to 1$. In particular, $I(t_T w_T; \mathbb R^3) \to I(\mathfrak w_\infty; \mathbb R^3)=c_\infty$ as  $T\to +\infty$, and then,
    by \eqref{II_4.1}, we infer that
    \begin{equation*}
        \limsup_{\lambda \to +\infty} b_{\lambda}\leq \limsup_{\lambda \to +\infty} b_{\lambda}^{*} \leq c_\infty \quad \text{and}\quad \limsup_{\lambda \to+ \infty} c_{\lambda} \leq c_\infty.
    \end{equation*}
We can conclude by \eqref{inferiores}.
\end{proof}

With the above propositions in hand, we can provide a proof of the main result in this section. Given a number $l \in \mathbb R$, we define the sublevel set
\[
    [I(\, \cdot \, ; \lambda \Omega)]^l:=\Big\{u\in \mathcal M_\rho (\lambda \Omega) : I(u; \lambda \Omega)\leq l \Big\}.
\]

For the next proposition, note that $M_{B_r}>M_{\Omega}$ since $B_r\subset \Omega$, in virtue of \eqref{eq:M} and \eqref{partesuave2}.

\begin{prop}\label{propII_4.1}
    Let $2<p<3$. For any  $\rho\in(0,\rho_1)$, there exists $\Lambda=\Lambda(\rho)> 1$ such that, for every $\lambda \geq \Lambda$ it holds that
    \[
        u\in [I(\cdot \, ;\lambda \Omega)]^{l(\lambda)} \  \Longrightarrow \ \beta(u) \in \lambda \Omega ^+_r,
    \]
    where
    \[
        l(\lambda):=b_\lambda^*+\frac{1}{\lambda}\bigg(1+\frac{M_{B_r}\rho^4}{4}\bigg).
    \]
\end{prop}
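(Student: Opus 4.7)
The plan is to argue by contradiction: suppose there exist $\lambda_n\to+\infty$ and $u_n\in\mathcal M_\rho(\lambda_n\Omega)$ with $I(u_n;\lambda_n\Omega)\le l(\lambda_n)$ yet $\beta(u_n)\notin\lambda_n\Omega^+_r$, that is $d(\beta(u_n),\lambda_n\Omega)>\lambda_n r$. The strategy is to show that such low energy forces $u_n$, trivially extended to $\mathbb R^3$, to concentrate as a near-translate of the radial ground state $\mathfrak w_\infty$, and then to exploit radiality together with the smallness of $1/\lambda_n$ to force $\beta(u_n)$ close to $\lambda_n\Omega$.

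\textbf{Step 1: reduction to a minimising sequence for $c_\infty$.} Applying \eqref{relacion} with $D=\lambda_n\Omega$ together with estimate \eqref{estimativa} gives $I(u_n;\mathbb R^3)\le I(u_n;\lambda_n\Omega)+M_\Omega\rho^4/(4\lambda_n)$. Combining this with the hypothesis $I(u_n;\lambda_n\Omega)\le l(\lambda_n)$, Proposition \ref{propII_3.1} (which provides $b_{\lambda_n}^*\to c_\infty$), and the universal lower bound $c_\infty\le I(u_n;\mathbb R^3)$ from \eqref{inferiores}, we deduce $I(u_n;\mathbb R^3)\to c_\infty$. Hence $\{u_n\}\subset\mathcal M_\rho(\mathbb R^3)$ is a minimising sequence for $c_\infty$, and Fact 2 of Section \ref{sec:prelim} supplies $y_n\in\mathbb R^3$ with $v_n:=u_n(\cdot+y_n)\to\mathfrak w_\infty$ in $H^1(\mathbb R^3)$, where $\mathfrak w_\infty$ is positive and radial thanks to $\rho<\rho_1$.

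\textbf{Step 2: locating the concentration point and setting up the barycenter estimate.} Fix $R_0>0$ with $\int_{B_{R_0}}\mathfrak w_\infty^2>\rho^2/2$. Then $L^2$-convergence gives $\int_{B_{R_0}(y_n)}u_n^2>0$ for large $n$; since $\mathrm{supp}\,u_n\subset\overline{\lambda_n\Omega}$, this forces $d(y_n,\lambda_n\Omega)\le R_0$ uniformly in $n$. The change of variable $z=x-y_n$ yields
\[
\beta(u_n)-y_n \;=\; \frac{\int_{\mathbb R^3} z\,|\nabla v_n|^2\,\mathrm{d}z}{\int_{\mathbb R^3} |\nabla v_n|^2\,\mathrm{d}z},
\]
and the denominator tends to $\|\nabla\mathfrak w_\infty\|_2^2>0$, so it suffices to make the numerator strictly smaller than $\lambda_n r$ in absolute value (up to lower order).

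\textbf{Step 3: controlling the numerator --- the main obstacle.} The difficulty is that $|z|$ is unbounded whereas $\mathrm{supp}\,v_n\subset\lambda_n\Omega-y_n$ can have diameter of order $\lambda_n$, so strong $H^1$-convergence cannot be invoked globally. I would split the numerator over $B_{R_1}$ and its complement for a parameter $R_1$ to be fixed at the end. On $B_{R_1}$, local strong convergence gives $\int_{B_{R_1}} z\,|\nabla v_n|^2\,\mathrm{d}z\to\int_{B_{R_1}}z\,|\nabla\mathfrak w_\infty|^2\,\mathrm{d}z=0$, the latter vanishing by radiality of $\mathfrak w_\infty$ combined with rotational symmetry of $B_{R_1}$. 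On $\{|z|>R_1\}$, the bound $|z|\le C\lambda_n$ on $\mathrm{supp}\,v_n$ (which follows from $d(y_n,\lambda_n\Omega)\le R_0$ and $\mathrm{diam}(\lambda_n\Omega)=\lambda_n\mathrm{diam}(\Omega)$) together with the $L^2$-convergence $\nabla v_n\to\nabla\mathfrak w_\infty$ yields
\[
\Big|\int_{|z|>R_1} z\,|\nabla v_n|^2\,\mathrm{d}z\Big| \;\le\; C\lambda_n\int_{|z|>R_1}|\nabla v_n|^2\,\mathrm{d}z \;\le\; C\lambda_n\bigl(2\eta(R_1)+o_n(1)\bigr),
\]
with $\eta(R_1):=\int_{|z|>R_1}|\nabla\mathfrak w_\infty|^2\to 0$ as $R_1\to+\infty$ by integrability of $|\nabla\mathfrak w_\infty|^2$. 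Choosing $R_1$ so large that the constant multiple of $\eta(R_1)$ stays below $r/2$, one obtains $|\beta(u_n)-y_n|\le\lambda_n r/2+o_n(1)$, and hence $d(\beta(u_n),\lambda_n\Omega)\le R_0+\lambda_n r/2+o_n(1)<\lambda_n r$ for all large $n$ (since $\lambda_n r\to+\infty$), contradicting the standing assumption and closing the argument.
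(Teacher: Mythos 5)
Your argument is correct, but it takes a genuinely different route from the paper's. The paper also argues by contradiction, but instead of analysing $u_n$ directly it fixes $R>\operatorname{diam}\Omega$ and shows geometrically that $\lambda_n\Omega\subset A_{\lambda_n R,\lambda_n r,x_n}$ (the outer inclusion because the barycenter of a function supported in a ball lies in that ball, the inner one from $x_n=\beta(u_n)\notin\lambda_n\Omega^+_r$); hence $u_n$ competes for the annulus infimum and $a(R,r,\lambda_n)\le I(u_n;A_n)\le b^*_{\lambda_n}+O(1/\lambda_n)\to c_\infty$, contradicting the strict gap $\liminf_\lambda a(R,r,\lambda)>c_\infty$ of Proposition \ref{propII_2.1}. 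You bypass the annulus entirely: you show $\{u_n\}$ is a minimising sequence for $c_\infty$ on $\mathbb R^3$, invoke the compactness of Fact 2 to locate a concentration point $y_n$ with $d(y_n,\lambda_n\Omega)\le R_0$, and then estimate $|\beta(u_n)-y_n|$ directly, the key observation being that the unbounded factor $|z|\le C\lambda_n$ on the support is harmless because the target distance $\lambda_n r$ scales the same way, so the tail $\eta(R_1)$ and the $o_n(1)$ errors need only be small relative to $r$, not to $1/\lambda_n$. In effect you inline the technical core of the paper's Proposition \ref{propII_2.1} (the gradient-concentration claims) into this proof; your version is more self-contained and would make the quantity $a(R,r,\lambda)$ and Proposition \ref{propII_2.1} unnecessary, while the paper's version modularises that analysis and keeps the present proof purely geometric. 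All the steps you give check out: the bound $I(u_n;\mathbb R^3)\le l(\lambda_n)+M_\Omega\rho^4/(4\lambda_n)\to c_\infty$ follows from \eqref{relacion}--\eqref{estimativa} and Proposition \ref{propII_3.1} (the lower bound $c_\infty\le I(u_n;\mathbb R^3)$ is just the definition of $c_\infty$, not really \eqref{inferiores}), the cancellation $\int_{B_{R_1}}z|\nabla\mathfrak w_\infty|^2\,\text{d}z=0$ uses the radiality of $\mathfrak w_\infty$ guaranteed for $\rho<\rho_1$, and the product $C\lambda_n\,o_n(1)$ is indeed eventually below $\lambda_n r/8$. The only item you omit is the nonemptiness of the sublevel $[I(\cdot\,;\lambda\Omega)]^{l(\lambda)}$, which the paper establishes inside this proof (inequality \eqref{novacio}) and uses later in \eqref{eq:comparacion}; it is not needed for the stated implication but should be recorded somewhere.
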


\begin{proof}
For any $\lambda >1$ there is a function $u_\lambda \in \mathcal M_\rho(B_{\lambda r})$ for which
    \[
        I(u_\lambda;B_{\lambda r})<b_\lambda +\frac{1}{\lambda}.
    \]
    Then, using  
    the fact that the smooth part of the Green's function on $\lambda \Omega$ is positive
    and \eqref{relacion} with $D=B_{\lambda r}$, we get
    \begin{multline*}
        I(u_\lambda;\mathbb R^3)-\frac{1}{4}\int_{\lambda \Omega \times \lambda \Omega} H(x,y; \lambda \Omega)u_\lambda^2(x)u_\lambda^2(y)\text{d} x\text{d} y<I(u_\lambda;\mathbb R^3)\\ <b_\lambda +\frac{1}{\lambda}+\frac{1}{4}\int_{B_{\lambda r}\times B_{\lambda r}} H(x,y; B_{\lambda r})u_\lambda^2(x)u_\lambda^2(y)\text{d} x\text{d} y, 
    \end{multline*}
    from which, using \eqref{relacion} with $D=\lambda \Omega$ and the estimate \eqref{estimativa} with $D=B_r$, the inequalities
    \begin{equation}\label{novacio}
        c_\lambda \leq I(u_\lambda;\lambda\Omega)<b_\lambda +\frac{1}{\lambda}+\frac{M_{B_r}\rho^4}{4\lambda}\leq l(\lambda).
    \end{equation}
    hold. Hence, the sublevel is not empty. 
    
    We argue now by contradiction. Assume that there exists a sequence of numbers $\{\lambda_n\}$ with  $\lambda _n \to +\infty$ 
    and $u_n \in \mathcal M_\rho (\Omega _{n})$ is such that $I(u_n; \Omega _{n})\leq l(\lambda_n)$ but $x_n:= \beta (u_n) \notin \lambda _n \Omega ^+_r$. From here and until the end of the proof, let 
    $$\Omega_{n}:=\lambda_n \Omega, \quad 
    R> \mbox{diam}\ \Omega, 
    \quad A_n:=A_{\lambda_n R,\lambda_n r,x_n},\quad c_n:=c_{\lambda_n},\quad  b_{n}^{*}:=b_{\lambda_n}^{*}.$$

\smallskip

    {\bf Claim: } The inclusion $\Omega _{n} \subset A_n$ holds.

    \medskip

    \noindent Of course, from the chain of implications
    \begin{align*}
        x_n \notin \lambda _n \Omega ^+_r \ & \Longrightarrow \ d(x_n / \lambda _n, \Omega) >r \ \Longrightarrow \ d(x_n, \Omega _{n}) > \lambda _n r \ \Longrightarrow \  B_{\lambda_n r} (x_n) \cap \Omega _{n} =\emptyset\\
        \ & \Longrightarrow \ \overline{ B_{\lambda_n r} (x_n)} \cap \Omega _{n} =\emptyset
    \end{align*}
    we obtain that
    \begin{equation}\label{II_4.6}
        \Omega _{n} \subset \mathbb R^3 \setminus \overline{ B_{\lambda_n r} (x_n)}.
    \end{equation} 
    Now let $y_n \in \Omega _{n}$ be an arbitrary point. Note that
    \[
        z_n\in \Omega _{n}\ \Longrightarrow \abs{z_n-y_n} < \mbox{diam}\ \Omega _{n} = \lambda_ n\, \mbox{diam}\ \Omega < \lambda _n R \ \Longrightarrow \ z_n\in  B_{\lambda_n R}(y_n).
    \]  
    Since $\textrm{supp } u_n \subset \Omega _{n} \subset  B_{\lambda_n R}(y_n)$, 
this implies that $x_n=\beta (u_n) \in  B_{\lambda_n R}(y_n)$. Therefore, $y_n \in B_{\lambda_n R}(x_n)$
and, by the arbitrariness of $y_{n}$, it follows that
\begin{equation}\label{eq:contenido}
\Omega _{n} \subset B_{\lambda _n R}(x_n).
\end{equation}
    From \eqref{II_4.6} and \eqref{eq:contenido} we get the Claim. 

\medskip

    We assert now that
    \begin{equation}\label{II_4.8}
        a(R, r,\lambda _n, x_n)< c_{n}+\frac{1}{\lambda_n}+\frac{M_{B_r}\rho^4}{4\lambda_n} <b_n^*+\frac{2}{\lambda_n}+\frac{M_{B_r}\rho^4}{2\lambda_n}.
    \end{equation}
    Indeed, we can take a sequence $\{v_n\}$ with $v_n\in \mathcal M_\rho (\Omega _{n})$ such that
    \[
        I(v_n;\Omega_n)<c_n+\frac{1}{\lambda_n}.
    \]
    Then by the Claim, the fact that the smooth part of the Green's function on $A_n$ is positive, and  \eqref{relacion} with $D=\Omega_n$, we deduce that
    \begin{multline*}
        I(v_n;\mathbb R^3)-\frac{1}{4}\int_{A_n\times A_n} H(x,y; A_n)v_n^2(x)v_n^2(y)\text{d} x\text{d} y<I(v_n;\mathbb R^3)\\ <c_n +\frac{1}{\lambda_n}+\frac{1}{4}\int_{\Omega_n \times \Omega_n} H(x,y; \Omega_n)v_n^2(x)v_n^2(y)\text{d} x\text{d} y.
    \end{multline*}
    From these inequalities, using \eqref{relacion} with $D=A_n$, the estimate \eqref{estimativa} with $D=\Omega$, and \eqref{partesuave2} with $D_1=B_r$, $D_2=\Omega$, we get
   \[
        a(R, r,\lambda _n, x_n)\leq I(v_n;A_n) <c_n +\frac{1}{\lambda_n}+\frac{M_{B_r}\rho^4}{4\lambda_n}.
   \]
   Therefore, in virtue of \eqref{novacio} we infer \eqref{II_4.8}. But then, since $a(R, r,\lambda _n, x_n)=a( R, r,\lambda _n)$,  we can write
    \[
        a( R, r,\lambda _n)<b_n^*+\frac{2}{\lambda_n}+\frac{M_{B_r}\rho^4}{2\lambda_n}
    \]
    which, together with Proposition \ref{propII_3.1}, implies that
    \[
        \liminf_{n\to \infty} a( R, r,\lambda _n) \leq \liminf_{n\to \infty} b_{n}^{*}= 
        c_ \infty.
    \]
This contradicts Proposition \ref{propII_2.1}, so that the proof is ended.
\end{proof}

We recall that $\mathfrak w_{B_{\lambda r}}^*  \in \mathcal M_{\rho}^{*}(B_{\lambda r})$  denotes a  
positive, radial ground state; i.e., $\mathfrak w_{B_{\lambda r}}^*$ is such that
\[
I(\mathfrak w_{B_{\lambda r}}^*; B_{\lambda r})=b_{\lambda}^{*};
\]
see \eqref{eq:brad}.
We now consider the continuous map $\Psi _{\lambda, r}: \lambda \Omega ^-_r \to H^1_0(\lambda \Omega)$,  given by
\[
    [\Psi _{\lambda, r}(y)](x):=\begin{cases}
        \mathfrak w_{B_{\lambda r}}^*(\abs{x-y}), & \text{if } x\in  B_{\lambda r}(y),\\
        0, & \text{if }x\in \lambda \Omega \setminus B_{\lambda r}(y),
    \end{cases}
    \quad \text{for every } y \in \lambda \Omega ^-_r.
\]

\medskip
Let us fix $y\in \lambda \Omega^{-}_r$; keeping in mind the terms of the functional, we make explicit the relation between integrals involving $\Psi _{\lambda, r}(y)$ and the corresponding integrals involving $\mathfrak w_{B_{\lambda r}}^*$. For the gradient terms we have
\[
    \int_{\lambda \Omega}\big \lvert \nabla [\Psi _{\lambda, r}(y)](x)\big \rvert^2 \text{d} x=\int_{B_{\lambda r}(y)}\big \lvert \nabla \mathfrak w_{B_{\lambda r}}^*(\abs{x-y})\big \rvert ^2\text{d} x=\int_{B_{\lambda r}}\abs{\nabla \mathfrak w_{B_{\lambda r}}^*(\xi)}^2\text{d} \xi.
\]
The nonlinear terms are such that
\[
    \int_{\lambda \Omega}\big \lvert [\Psi _{\lambda, r}(y)](x) \big \rvert ^p \text{d} x=\int_{B_{\lambda r}(y)}\big \lvert \mathfrak w_{B_{\lambda r}}^*(\abs{x-y}) \big \rvert^p \text{d} x=\int_{B_{\lambda r}}\abs{\mathfrak w_{B_{\lambda r}}^*(\xi)}^p \text{d} \xi.
\]
With respect to the nonlocal nonlinearities, first note that
\begin{multline*}
    \int_{\lambda \Omega \times \lambda \Omega} \frac{[\Psi _{\lambda, r}(y)]^2(x)[\Psi _{\lambda, r}(y)]^2(z)}{\abs{x-z}}\text{d} x\text{d} z=\\ \int_{B_{\lambda r}(y) \times B_{\lambda r}(y)} \frac{[\mathfrak w_{B_{\lambda r}}^*(\abs{x-y})]^2[\mathfrak w_{B_{\lambda r}}^*(\abs{z-y})]^2}{\abs{x-z}}\text{d} x\text{d} z
    =\int_{B_{\lambda r} \times B_{\lambda r}} \frac{[\mathfrak w_{B_{\lambda r}}^*(\xi)]^2[\mathfrak w_{B_{\lambda r}}^*(\zeta)]^2}{\abs{\xi-\zeta}}\text{d} \xi \text{d} \zeta.
\end{multline*}
Then, since
\[
    (x,z)=(\xi+y,\zeta+y)\in B_{\lambda r}(y)\times B_{\lambda r}(y)\quad \Longleftrightarrow \quad (\xi,\zeta)\in B_{\lambda r}\times B_{\lambda r},
\]
it follows that the terms involving the smooth parts of the Green functions satisfy
    \begin{align*}
        \sigma_{\lambda,r}(y)&:=\int_{\lambda \Omega \times \lambda \Omega}H(x,z;\lambda \Omega)[\Psi _{\lambda, r}(y)]^2(x)[\Psi _{\lambda, r}(y)]^2(z)\text{d} x\text{d} z\\ 
        &=\int_{B_{\lambda r}\times B_{\lambda r}} H(\xi+y,\zeta+y; \lambda \Omega)[\mathfrak w_{B_{\lambda r}}^*(\xi)]^2[\mathfrak w_{B_{\lambda r}}^*(\zeta)]^2\text{d} \xi \text{d} \zeta.
    \end{align*}
Summing up, we can write
\[
    I(\Psi _{\lambda, r}(y);\lambda \Omega)=I(\mathfrak w_{B_{\lambda r}}^*; \mathbb R^3)-\frac{\sigma_{\lambda, r}(y)}{4}.
\]
But then, by using \eqref{relacion} (first with $D=\lambda \Omega$, $u=\Psi _{\lambda, r}(y)$, then with $D=B_{\lambda r}$, $u=\mathfrak w_{B_{\lambda r}}^*$), \eqref{estimativa} with $D=B_r$, we find that
\begin{align*}
    I(\Psi _{\lambda, r}(y);\lambda \Omega)&<I(\mathfrak w_{B_{\lambda r}}^*;\mathbb R^3)\\
    &=I(\mathfrak w_{B_{\lambda r}}^*; B_{\lambda r})+\frac{1}{4}\int_{B_{\lambda r}\times B_{\lambda r}} H(x,z; B_{\lambda r})[\mathfrak w_{B_{\lambda r}}^*(x)]^2[\mathfrak w_{B_{\lambda r}}^*(z)]^2\text{d} x\text{d} z\\
    &=b^*_\lambda+\frac{1}{4}\int_{B_{\lambda r}\times B_{\lambda r}} H(x,z; B_{\lambda r})[\mathfrak w_{B_{\lambda r}}^*(x)]^2[\mathfrak w_{B_{\lambda r}}^*(z)]^2\text{d} x\text{d} z\\
    &<b^*_\lambda+\frac{M_{B_r}\rho^4}{4\lambda}\\
    &<l(\lambda).
\end{align*}
Taking $\lambda \geq \Lambda$, by Proposition \ref{propII_4.1}, it follows that $\beta (\Psi _{\lambda, r}(y) )=y$.

\medskip

Therefore, for $\lambda\geq \Lambda$, we have the following diagram of continuous maps
\[
    \lambda\Omega_{r}^{-}\xrightarrow{\ \Psi _{\lambda, r} \ } [I(\cdot\,;\lambda \Omega)]^{l(\lambda)}\xrightarrow{\ \ \beta \ \ } \lambda \Omega_{r}^{+}\simeq \lambda\Omega_{r}^{-}
\]
and the composition is homotopic to the identity map of $\lambda\Omega_{r}^{-}$.

\medskip

With the above ingredients in hands the next result is standard,
but we present the proof for the reader's convenience.

\begin{prop}\label{propII_5.1}
        Let $2<p<3$ and  $\rho \in (0,\rho_1)$. Then, for any $\lambda \geq \Lambda$
(the one given in  Proposition \ref{propII_4.1}) it holds
    \[
        \emph{cat}\, [I(\cdot \, ; \lambda \Omega)]^{l(\lambda)} \geq \emph{cat}\, \lambda\Omega=\emph{cat}\, \Omega.
    \]
\end{prop}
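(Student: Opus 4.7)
The plan is the standard Benci--Cerami category argument applied to the maps already constructed. The discussion preceding the statement gives, for $\lambda\geq\Lambda$, continuous maps
\[
    \lambda\Omega_r^-\xrightarrow{\;\Psi_{\lambda,r}\;} X \xrightarrow{\;\beta\;} \lambda\Omega_r^+,\qquad X:=[I(\cdot\,;\lambda\Omega)]^{l(\lambda)},
\]
where the estimate $I(\Psi_{\lambda,r}(y);\lambda\Omega)<l(\lambda)$ ensures that $\Psi_{\lambda,r}$ actually takes values in $X$, and Proposition \ref{propII_4.1} guarantees $\beta(X)\subset\lambda\Omega_r^+$. Moreover, $\beta\circ\Psi_{\lambda,r}$ is homotopic (through maps $\lambda\Omega_r^-\to\lambda\Omega_r^+$) to the inclusion $i:\lambda\Omega_r^-\hookrightarrow\lambda\Omega_r^+$, which by \eqref{II_4.9} is a homotopy equivalence. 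By homotopy invariance of $\cat$ we then have $\cat\,\Omega=\cat\,\lambda\Omega=\cat\,\lambda\Omega_r^-=\cat\,\lambda\Omega_r^+$, so it is enough to prove $\cat\,X\geq\cat\,\lambda\Omega_r^-$.

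To establish this, I would set $n:=\cat\,X$ and fix a closed cover $X=A_1\cup\cdots\cup A_n$ with each $A_j$ contractible in $X$ through a homotopy $h_j:A_j\times[0,1]\to X$. Define the pullbacks $B_j:=\Psi_{\lambda,r}^{-1}(A_j)\subset\lambda\Omega_r^-$; by continuity of $\Psi_{\lambda,r}$ they are closed, and since the $A_j$ cover $X$ they cover $\lambda\Omega_r^-$. For each $j$, the composite
\[
    (y,t)\mapsto \beta\bigl(h_j(\Psi_{\lambda,r}(y),t)\bigr):B_j\times[0,1]\to\lambda\Omega_r^+
\]
is a null-homotopy of $\beta\circ\Psi_{\lambda,r}|_{B_j}$ in $\lambda\Omega_r^+$. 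Concatenating with the restriction to $B_j$ of the homotopy $\beta\circ\Psi_{\lambda,r}\simeq i$ yields a null-homotopy of $i|_{B_j}$ in $\lambda\Omega_r^+$, so $B_j$ is contractible in $\lambda\Omega_r^+$. Composing this contraction with a homotopy inverse of $i$, supplied by \eqref{II_4.9}, transports it to a contraction of $B_j$ inside $\lambda\Omega_r^-$. Hence $\{B_j\}_{j=1}^{n}$ is a categorical cover of $\lambda\Omega_r^-$ by closed sets contractible in $\lambda\Omega_r^-$, giving $\cat\,\lambda\Omega_r^-\leq n=\cat\,X$, which is the desired inequality.

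There is essentially no new analytic work here: all delicate PDE input (coercivity and lower bounds for $I$, control of the Green's function smooth part under scaling, radiality of $\mathfrak{w}_\infty$) has already been absorbed into Propositions \ref{propII_2.1}--\ref{propII_4.1} and into the explicit estimate $I(\Psi_{\lambda,r}(y);\lambda\Omega)<l(\lambda)$. The only mildly technical point is the asymmetry $\beta(X)\subset\lambda\Omega_r^+$ while $\Psi_{\lambda,r}$ originates from $\lambda\Omega_r^-$, but the homotopy equivalence in \eqref{II_4.9} makes the transfer of contractibility between the two collars entirely standard.
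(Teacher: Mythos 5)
Your argument is correct and is essentially the paper's proof: you pull back a categorical cover of the sublevel set via $\Psi_{\lambda,r}$, contract each pullback in $\lambda\Omega_r^+$ using $\beta$ composed with the given contractions, and conclude via the homotopy equivalences in \eqref{II_4.9}. The only cosmetic difference is at the end, where the paper invokes $\cat_{\lambda\Omega_r^+}(\lambda\Omega_r^-)=\cat\,\Omega$ directly rather than transporting the contractions back into $\lambda\Omega_r^-$ (and note that in fact $\beta\circ\Psi_{\lambda,r}$ equals the inclusion on the nose, so your concatenation step is unnecessary but harmless).
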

\begin{proof}
    Assume that $\cat\, [I(\cdot \, ; \lambda \Omega)]^{l(\lambda)}=n$; thus
\[
    [I(\cdot \, ; \lambda \Omega)]^{l(\lambda)}=F_1 \cup \ldots \cup F_n,
\]
where each  $F_i$ is closed and contractible in $[I(\cdot \, ; \lambda \Omega)]^{l(\lambda)}$. Hence, for each $i\in \{1,\ldots, n\}$, there exists 
\[
    h_i \in C \Big( [0,1]\times F_i, [I(\cdot \, ; \lambda \Omega)]^{l(\lambda)} \Big)
\]
with
    \[
        h_i(0,u)=u\quad \text{and}\quad h_i(1,u)=w_i \in [I(\cdot \, ; \lambda \Omega)]^{l(\lambda)}, \quad \forall \, u\in F_i\,.
    \]  
    Note that the sets $K_i:= \Psi ^{-1}_{\lambda, r}(F_i)$ are closed and satisfy   
    \[
        \lambda \Omega^-_r =K_1 \cup \ldots \cup K_n.
    \]   
Now we claim that each $K_i$ is contractible in $\lambda \Omega^+_r$. Indeed, for any $i\in \{1,\ldots, n\}$ fixed, the map 
    \[
    g_i: [0,1]\times K_i \to \lambda \Omega^+_r \quad \text{defined by}\quad g_i(t,y):= \beta\big(h_i\left(t,\Psi _{\lambda, r}(y)\right)\big) 
    \]
    is continuous and such that, for all $y\in K_i\,$,
\[
\begin{array}{ll}
        g_i(0,y)=\beta \left(\Psi _{\lambda, r}(y) \right)=y, \medskip \\
         g_i(1,y)=\beta(w_i) \in \lambda \Omega^+_r \quad \ \text{ (by Proposition \ref{propII_4.1})}.
\end{array}
\]  
   Then, by \eqref{II_4.9},
    \[
        \textrm{cat}\, \Omega = \textrm{cat}_{\lambda \Omega^+_r}(\lambda \Omega^-_r)\leq n=\textrm{cat}\, [I(\cdot \, ; \lambda \Omega)]^{l(\lambda)}.
    \]
This concludes the proof.    
\end{proof}

\medskip

\section{Proof of Theorem \ref{th:main}} 
\label{sec:finale} 

\medskip

Let us recall here a compactness condition useful to implement variational methods. In general, if $H$ is a Hilbert space, $\mathcal M\subset H$ a submanifold and $I:H\to \mathbb R$ a $C^{1}$ functional, we say that $I$ satisfies the Palais-Smale condition on $\mathcal M$ 
if any sequence $\{u_{n}\}\subset \mathcal M$ such that
$$ \{I(u_{n}) \}  \  \text{ is convergent  and }   (I_{|\mathcal M})'(u_{n}) \to 0$$
possesses a subsequence converging to some $u \in \mathcal M$. 
We will also say that $I$ constrained to $\mathcal M$ satisfies the $(PS)$ condition.

It is known that our functional $I(\cdot \, ; \lambda \Omega)$ constrained to $\mathcal M_\rho( \lambda \Omega)$ satisfies the $(PS)$ condition  (see  \cite[Appendix]{PS}); hence by the Ljusternik-Schnirelmann theory and Proposition \ref{propII_5.1}, for a fixed $\rho\in(0,\rho_1)$ and for every $\lambda \geq \Lambda=\Lambda(\rho)$, the functional $I(\cdot \, ; \lambda \Omega)$ constrained to $\mathcal M_\rho( \lambda \Omega)$ has at least $N:=\textrm{cat}\, (\lambda \Omega) =\textrm{cat}\, \Omega$ distinct critical points $\{u_{\rho,\lambda}^{i}\}_{i=1,\ldots,N}$ with energy satisfying 
\begin{equation}\label{eq:comparacion}
c_{\lambda}\leq I(u^{i}_{\rho,\lambda};\lambda \Omega)\leq b_\lambda^*+\frac{1}{\lambda}\bigg(1+\frac{M_{B_r}\rho^4}{4}\bigg),\quad i=1,\ldots, N,
\end{equation}
and the right hand side can be made less than $c_\infty/2$ by Proposition \ref{propII_3.1}, up to taking a greater value of $\Lambda$.
This means that $\sup_{\lambda \geq \Lambda}I(u^{i}_{\rho,\lambda};\lambda \Omega)<0$. The Lagrange multipliers $\{\omega^{i}_{\rho,\lambda}\}_{i=1,\ldots, N}\subset \mathbb R$ are associated to the critical points.

\medskip

 Moreover by \eqref{eq:comparacion}, Proposition \ref{propII_3.1} and \eqref{estimativa} (with $D=\Omega$) it is, for each $i=1,\ldots, N$,
    \[
        I(u^{i}_{\rho,\lambda};\lambda \Omega)=I(u^{i}_{\rho,\lambda};\mathbb R^{3})-\frac{1}{4}\int_{\lambda \Omega\times \lambda \Omega} H(x,y; \lambda \Omega)[u^{i}_{\rho,\lambda}]^2(x)[u^{i}_{\rho,\lambda}]^2(y)\text{d} x\text{d} y \to c_{\infty}
    \]
as $\lambda\to+\infty$ so that, in particular, each family
    $\{u^{i}_{\rho,\lambda}\}_{\lambda \geq \Lambda}$ (extended by zero outside of $\lambda \Omega$)
    provides a minimizing sequence for $c_\infty$ as $\lambda\to +\infty$. Therefore, up to translations (recall Fact 2 in Section \ref{sec:prelim}), for each $i\in \{1,\ldots, N\}$,
    \[
        u^{i}_{\rho,\lambda}\to \mathfrak w_\infty>0 \text{ in } H^1(\mathbb R^3) \quad \text{as } \lambda\to+\infty.
    \]
Using \eqref{lagrange:infinito}, the solutions also satisfy
    \begin{align*}
        \rho^2 \omega^{i}_{\rho,\lambda}&= \int_{\lambda \Omega} \abs{\nabla u^{i}_{\rho,\lambda}}^2 \text{d} x +\int_{\lambda \Omega}\phi_{u^{i}_{\rho,\lambda}} [u^{i}_{\rho,\lambda}]^2 \, \text{d} x- \int_{\lambda \Omega} \abs{u^{i}_{\rho,\lambda}}^p \text{d} x\\
        &= \int_{\mathbb R^3} \abs{\nabla \mathfrak w_\infty}^2 \text{d} x +\int_{\mathbb R^3}\varphi _{\infty} \mathfrak w_\infty^2 \, \text{d} x- \int_{\mathbb R^3} \mathfrak w_\infty^p \text{d} x+o_\lambda(1)\\
        &= \rho^2 \omega_\infty +o_\lambda(1),
    \end{align*}
    from which, for every $i\in \{1,\ldots,N\}$,
    $\lim_{\lambda \to +\infty}\omega^{i}_{\rho,\lambda}= \omega_\infty<0.$

\medskip

Defining, as usual, $u^+:=\max \{u,0\}$ we see that the whole previous analysis is valid for the functional
    \[
        I^+(u; \lambda\Omega):= \frac{1}{2}\int_{\lambda\Omega} \abs{\nabla u}^2 \text{d} x+\frac{1}{4}\int_{\lambda\Omega} \phi_u u^2 \text{d} x-\frac{1}{p}\int_{\lambda\Omega} (u^+)^p \text{d} x
    \]
restricted to $\mathcal M_{\rho}(\lambda \Omega)$. Then we have, for any fixed $\rho\in(0,\rho_1)$ and 
$\lambda \geq \Lambda$, at least $\cat(\lambda \Omega)=\cat\Omega$  solutions $(u_{\rho,\lambda}, \omega_{\rho,\lambda})$ of
 \begin{equation*}
 \left\{
	    \begin{array}{ll}
	-\Delta u  +\phi_{u} u - (u^{+})^{p-1}= \omega u& \quad \text{in }\lambda\Omega, \smallskip\\
	u =0 &\quad \text{on }\partial (\lambda\Omega),
	    \end{array}
    \right.
\end{equation*}
with $u_{\rho,\lambda}$ nonnegative, $\displaystyle\int_{\lambda\Omega} u_{\rho,\lambda}^{2}\text{d} x=\rho^2$ and
$\omega_{\rho,\lambda}<0$. The maximum principle allows to conclude that $u_{\rho,\lambda}>0$ in $\Omega$.
 
The final part of  Theorem \ref{th:main} is proved 
by using the same ideas of \cite{BCP} if $\Omega$ (and hence $\lambda\Omega$) is not contractible in itself. Indeed, in this case the compact set $K:=\overline{{\Psi_{\lambda,r}(\lambda \Omega_{r}^{-})}}\subset \mathcal M_{\rho}(\lambda\Omega)$
can not be contractible in the sublevel $[I(\cdot\, ; \lambda\Omega)]^{l(\lambda)}$. Take now $\widehat{u} \in \mathcal M_{\rho}(\lambda\Omega)\setminus K$ such that 
$\widehat{u}\geq 0$; thus $\widehat{u}\not \equiv 0$. Then
$I(\widehat{u}; \lambda\Omega)>l(\lambda),$
and the cone
$$\mathfrak C:=\Big\{t\widehat{u}+(1-t)u: t\in [0,1], u\in K \Big\}$$
does not contain the zero function (observe that the functions in $K$  are nonnegative).
Then, the projection of the cone on  $\mathcal M_{\rho}(\lambda\Omega)$
\[
    \mathrm P(\mathfrak C):=\bigg\{ \rho\frac{w}{|w|_{L^{2}(\lambda\Omega)}}: w\in \mathfrak C\bigg\}\subset \mathcal M_{\rho}(\lambda\Omega),
\]
is well defined. Let
\[
m(\lambda):=\max_{\mathrm P(\mathfrak C)}I(\cdot\, ;\lambda\Omega)>l(\lambda).
\]
Since $ K\subset \mathrm P(\mathfrak C)\subset \mathcal M_{\rho}(\lambda\Omega)$
and $\mathrm P(\mathfrak C)$ is contractible in $ [I(\cdot\,; \lambda\Omega)]^{m(\lambda)}$,
we infer that $K$ is also contractible in $ [I(\cdot\,; \lambda\Omega)]^{m(\lambda)}$.
On the other hand, recalling that $K$  is 
 not contractible  in $[I(\cdot\,;\lambda\Omega)]^{l(\lambda)}$ and  
the Palais-Smale condition is satisfied, we conclude that there is another nonnegative critical point 
$\widetilde u_{\rho,\lambda}$  for the functional
with energy level between $l(\lambda)$ and $m(\lambda)$; hence another Lagrange multiplier $\widetilde \omega_{\rho,\lambda}$, which is associated to $\widetilde u_{\rho,\lambda}$. In other words, there is  another solution $(\widetilde u_{\rho,\lambda}, \widetilde \omega_{\rho,\lambda})$ to the problem with $\widetilde u_{\rho,\lambda}$ nonnegative. However, we cannot guarantee as before the positivity of this additional solution $\widetilde u_{\rho,\lambda}$, since we do not know if it is at a nonpositive level of the functional and if the associated Lagrange multiplier $\widetilde \omega_{\rho,\lambda}$ is negative.

\section{Appendix}
The aim of this section is twofold.

\begin{itemize}
\item[i)]  On one hand, we show the strike difference between the case of the whole space $\mathbb R^{3}$ and the situation for a bounded 
domain $D$, from the point of view of the minimum of the functional with fixed $L^{2}$-norm
equal to $\rho$. In fact, while in $\mathbb R^{3}$ the minimum of the constrained functional is negative
(see Fact 2) for any $\rho>0$, in the bounded domain the minimum is positive
for  $\rho\leq \rho_{D}$, a constant  depending on the same domain. 

\item[ii)] On the other hand, we show that if the domain is expanding (namely we consider $\lambda D$ with $\lambda\to+\infty$), then $\rho_{\lambda D}\to 0$. As a consequence, when $\rho$ is fixed, if the domain  is expanding we will have $\rho_{\lambda D} <\rho$ and then the computations which led to get  the positivity of the minimum are not allowed.
\end{itemize}

\medskip

To address the first issue, let $D$ be a smooth bonded domain. Let us start by recalling that
\begin{lemma}(see \cite[Lemma 3.1 and Remark 3.2]{TMNA}) If $D\subset \mathbb R^{3}$ is a smooth bounded domain, $p\in(2,6)$ and $r\in(0,p)$, then 
for every $u\in H^{1}(D)$ it is 
$$|u|^{p}_{L^{p}(D)} \leq C_{D}^{p-r}\|u\|_{H^{1}(D)}^{p-r}|u|_{L^{2}(D)}^{r}$$
where $$C_{D} = \sup_{u\in H^{1}(D)\setminus\{0\} } \frac{ |u|_{L^{\frac{2(p-r)}{2-r}} (D)} }{\|u\|_{H^{1}(D)}}>0.$$
In particular, for every $u\in H^{1}_{0}(D)$ it holds
$$|u|^{p}_{L^{p}(D)} \leq C_{D}^{p-r} \left( 1+\frac{1}{\mu_{1}(D)}\right)^{\frac{p-r}{2}}|\nabla u|_{L^{2}(D)}^{p-r}|u|_{L^{2}(D)}^{r},$$
where $\mu_{1}(D)$ is the first eigenvalue of the Laplacian in $H^{1}_{0}(D)$.
\end{lemma}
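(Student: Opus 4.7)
The approach is pure interpolation: combine Hölder's inequality with the Sobolev embedding $H^1(D)\hookrightarrow L^q(D)$ (valid for $q\in[1,6]$ in three dimensions), and, for the $H^1_0$ version, add Poincaré's inequality at the end. The key algebraic choice is the Hölder decomposition that isolates $|u|^r_{L^2}$ on one side and a single Lebesgue norm controlled by $H^1$ on the other.

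First I would write $|u|^p=|u|^{p-r}\cdot|u|^r$ and apply Hölder's inequality with conjugate exponents $s'=2/r$ and $s=2/(2-r)$; the choice of $s'$ is dictated by wanting $rs'=2$. This yields
\[
|u|^p_{L^p(D)}\leq \left(\int_D |u|^{\frac{2(p-r)}{2-r}}\,\text{d} x\right)^{\frac{2-r}{2}}\left(\int_D|u|^2\,\text{d} x\right)^{r/2}=|u|^{p-r}_{L^{\frac{2(p-r)}{2-r}}(D)}\,|u|^r_{L^2(D)}.
\]
Then, by the definition of $C_D$, one has $|u|_{L^{\frac{2(p-r)}{2-r}}(D)}\leq C_D\,\|u\|_{H^1(D)}$, and raising to the $(p-r)$-th power gives the first inequality. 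The constant $C_D$ is finite precisely when the Sobolev embedding applies to the exponent $\frac{2(p-r)}{2-r}$; this is the place to check the implicit hypotheses (in particular $r<2$, so that the exponent is positive, together with $\frac{2(p-r)}{2-r}\leq 6$, which is satisfied in the regime of interest $p\in(2,3)$ and $r$ close to $2$).

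For the refinement in $H^1_0(D)$, I would use Poincaré: $|u|^2_{L^2(D)}\leq \mu_1(D)^{-1}|\nabla u|^2_{L^2(D)}$, so
\[
\|u\|^2_{H^1(D)}=|\nabla u|^2_{L^2(D)}+|u|^2_{L^2(D)}\leq \left(1+\frac{1}{\mu_1(D)}\right)|\nabla u|^2_{L^2(D)}.
\]
Plugging this into the bound for $\|u\|_{H^1(D)}^{p-r}$ obtained in the previous step delivers the second inequality.

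There is no substantial obstacle in this proof — it is a textbook interpolation argument. The only delicate point is bookkeeping on the admissible range of $r$: one must ensure $r\in(0,2)$ (so that the Hölder exponents are finite and positive) and $\frac{2(p-r)}{2-r}\in[1,6]$ (so that Sobolev yields a finite $C_D$); within the range of exponents used in the body of the paper these conditions are easily verified.
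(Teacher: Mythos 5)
Your argument is correct and is the standard interpolation proof (H\"older with exponents $2/r$ and $2/(2-r)$, then the embedding defining $C_D$, then Poincar\'e); the paper itself gives no proof of this lemma, only the citation to \cite{TMNA}, and your route is exactly the expected one. The only slip is in your closing remark: the finiteness of $C_D$ requires $\tfrac{2(p-r)}{2-r}\leq 6$, and this is verified in the paper because it takes $r=p-2\in(0,1)$ for $p\in(2,3)$, giving the exponent $4/(4-p)\in(2,4)$ --- not because ``$r$ is close to $2$'', where that exponent would in fact blow up and the embedding would fail.
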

By choosing $r=p-2$ the Lemma furnishes, for $u\in \mathcal M_{\rho}(D)$, 
\begin{equation}\label{eq:llave}
|u|^{p}_{L^{p}(D)} \leq C_{D}^2 \left( 1+\frac{1}{\mu_{1}(D)}\right)|\nabla u|_{L^{2}(D)}^{2}|u|_{L^{2}(D)}^{p-2} =: \widetilde C_{D}|\nabla u|_{L^{2}(D)}^{2}\rho^{p-2}.
\end{equation}
From  \eqref{eq:llave} it follows that
for a suitable small $\rho$, for example 
\begin{equation}\label{eq:rho}
\rho\leq\left( \frac{p}{4\widetilde C_{D}}\right)^{1/(p-2)}=:\rho_{D},
\end{equation}
it is $|u|^{p}_{L^{p}(D)} \leq \frac{p}{4}|\nabla u|^{2}_{L^{2}(D)}$ and therefore
\begin{equation}\label{positiva}
I(u; D)>\frac{1}{2}|\nabla u|_{L^{2}(D)}^{2}-\frac{1}{p}|u|^{p}_{L^{p}(D)} \geq \frac{1}{2}|\nabla u|_{L^{2}(D)}^{2} -  \frac{1}{4}|\nabla u|_{L^{2}(D)}^{2}\geq\frac{\rho^2}{4 }\mu_{1}(D)>0.
\end{equation}
The constant $\rho_{D}$ is not optimal; however, the above arguments show the following result that we were not able to find in the literature.
\begin{prop}\label{proposicion1}
Let  $D\subset \mathbb R^{3}$ be a bounded and smooth domain.  There is a constant $\rho_{D}>0$ such that,
for any  $\rho\in(0, \rho_{D}]$, it is
\[
    \min_{u\in \mathcal M_{\rho}(D)} I(u;D)>0.
\]
\end{prop}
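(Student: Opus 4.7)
The plan is to read off the inequality chain already displayed in \eqref{eq:llave}--\eqref{positiva}, package it as a proof, and then add the (standard) argument that the infimum is attained so that it deserves to be called a minimum.

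First, I would fix $\rho_D$ exactly as in \eqref{eq:rho}, namely $\rho_D:=\bigl(p/(4\widetilde C_D)\bigr)^{1/(p-2)}$, where $\widetilde C_D$ is the constant appearing in \eqref{eq:llave}. For any $\rho\in(0,\rho_D]$ and any $u\in\mathcal M_\rho(D)$, the Gagliardo--Nirenberg-type lemma (applied with $r=p-2$) gives
\[
|u|_{L^p(D)}^p\le \widetilde C_D\,|\nabla u|_{L^2(D)}^2\,\rho^{p-2}\le \tfrac{p}{4}|\nabla u|_{L^2(D)}^2.
\]
Dropping the nonnegative nonlocal term $\tfrac14\int_D\phi_u u^2\,\text{d} x$ and using this bound,
\[
I(u;D)\ge \tfrac12|\nabla u|_{L^2(D)}^2-\tfrac1p|u|_{L^p(D)}^p\ge \tfrac14|\nabla u|_{L^2(D)}^2.
\]
The Poincaré inequality $|\nabla u|_{L^2(D)}^2\ge \mu_1(D)|u|_{L^2(D)}^2=\mu_1(D)\rho^2$ then yields the uniform positive lower bound
\[
I(u;D)\ge \tfrac14\mu_1(D)\rho^2>0\qquad\text{for every }u\in\mathcal M_\rho(D).
\]
In particular $\inf_{\mathcal M_\rho(D)}I(\cdot;D)>0$.

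Second, I would verify that this infimum is attained, so that the inf is in fact a minimum. By Fact~1 of Section~\ref{sec:prelim}, $I(\cdot;D)$ is coercive on $\mathcal M_\rho(D)$, so any minimizing sequence $\{u_n\}$ is bounded in $H^1_0(D)$. Extract a subsequence with $u_n\rightharpoonup u$ weakly in $H^1_0(D)$ and strongly in $L^q(D)$ for every $q\in[1,6)$, using the compact Sobolev embedding in the bounded domain $D$. Strong $L^2$-convergence gives $u\in\mathcal M_\rho(D)$; strong $L^p$-convergence (since $p\in(2,3)\subset(2,6)$) handles the nonlinear term; and the nonlocal term $\int_D\phi_u u^2\,\text{d} x$ is continuous under this convergence (standard, via the properties of $\phi_u$ recalled after \eqref{eq:P}). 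Weak lower semicontinuity of the Dirichlet energy then gives $I(u;D)\le\liminf_n I(u_n;D)=\inf_{\mathcal M_\rho(D)}I(\cdot;D)$, so $u$ is a minimizer.

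There is no serious obstacle: the entire argument is essentially a direct citation of the preceding computation \eqref{eq:llave}--\eqref{positiva}, combined with a routine weak-compactness/Sobolev-embedding argument for attainment. The only mildly delicate point is ensuring that the nonlocal term passes to the limit, but this follows from the standard estimate \eqref{eq:convergencia} together with the compact embedding $H^1_0(D)\hookrightarrow L^{12/5}(D)$, which is precisely the setting in which $\phi_{u_n}\to\phi_u$ and $\int\phi_{u_n}u_n^2\to\int\phi_u u^2$ in a bounded domain.
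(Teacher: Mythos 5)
Your proposal is correct and follows essentially the same route as the paper: the positivity of the infimum is exactly the chain \eqref{eq:llave}--\eqref{positiva} with $\rho_D$ defined by \eqref{eq:rho}, and the attainment is the standard compactness argument that the paper dismisses in one line ("standard\dots since we have compactness being in a bounded domain"). The only difference is that you spell out the weak-compactness/compact-embedding details for the attainment, which the paper omits.
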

The fact that the infimum is achieved is standard and follows since we have compactness being in a bounded domain.

\medskip

We stress that in the above result the value $\rho_{D}$ depends on the domain. Furthermore,  the statement of  Proposition \ref{proposicion1} does not hold when we fix the $L^2$-norm a priori and consider the minimum of the constrained functional  on a family of expanding domains. 

To see this,
let $D$ be a smooth bounded domain, and fix $\rho\in(0,\rho_D]$.
    We have the following
\medskip

   {\bf Claim:} with the above notations
\begin{equation*}\label{eq:diverging}
        \{\widetilde C_{\lambda D}\}_{\lambda>1}\quad \text{is diverging as}\quad \lambda \to +\infty.
\end{equation*}

   Assuming the Claim  for a moment, since $\rho$ is fixed and $\lambda$ is increasing, we get that $\rho_{\lambda D} < \rho$, for a suitable large $\lambda$. Thus \eqref{eq:rho} is violated and we cannot guarantee anymore neither the second inequality in  \eqref{positiva} nor the subsequent positivity of the infimum on an expanding domain. 

In fact, we have seen in Proposition \ref{propII_3.1} that with $\rho$ fixed, considering the expanding domains
$B_{\lambda r}$ and $\lambda\Omega$,  the infimum of the functional is negative for large values of $\lambda$.
 It is not surprising then, since we can see $\mathbb R^{3}$ in some sense  as the limit of $B_{\lambda r}$ as $\lambda\to +\infty$, that $c_{\infty}<0$ (see Fact 2).
    
    \medskip
    
    Let us show the Claim; in first place, taking any $u\in H^1_0(D)\setminus\{0\}$ and $\lambda>1$, the function defined by
    \[
        x\in \lambda D \quad \longmapsto \quad v(x):=\frac{1}{\lambda^{3/2}}u(x/\lambda) \in \mathbb R
    \]
    is such that $v\in H^1_0(\lambda D)$. Since
    \[
        \int_{\lambda D}v^2 \text{d} x = \int_D u^2 \text{d} x \quad \text{and} \quad \int_{\lambda D}\abs{\nabla v}^2 \text{d} x = \frac{1}{\lambda^2}\int_{D}\abs{\nabla u}^2 \text{d} x,
    \]
    we have
    \[
        \mu_1(\lambda D) \leq \frac{\norm{v}^2_{H^1_0(\lambda D)}}{\abs{v}^2_{L^2(\lambda D)}}=\frac{1}{\lambda^2}\frac{\norm{u}^2_{H^1_0(D)}}{\abs{u}^2_{L^2(D)}}.
    \]
    By the arbitrariness of $u$, this implies that
    \begin{equation}\label{eq:primautoval}
      \mu_1(\lambda D) \leq \frac{1}{\lambda^2} \mu_1(D).
     \end{equation}
      
    On the other hand, let us choose a point $x_0$ in $D$ and $\delta>0$ such that $B_\delta(x_0)\subset D$. Also we fix a function  $w\in C^{\infty}_0(B_\delta(x_0))$  such that $w>0$ in $B_\delta(x_0)$.
    For any $\lambda>1$, $B_\delta(\lambda x_0) \subset B_{\lambda \delta}(\lambda x_0) \subset \lambda D$, and the function
    \[
        x\in B_{\delta}(\lambda x_0) \quad \longmapsto \quad w_\lambda (x):=w\left( x-(\lambda-1)x_0 \right) \in \mathbb R
    \]
    is such that
    \[
        \int_{B_{\delta}(\lambda x_0)} w_\lambda^p \text{d} x= \int_{B_{\delta}( x_0)} w^p \text{d} x, \quad \int_{B_{\delta}(\lambda x_0)} w_\lambda^2 \text{d} x = \int_{B_{\delta}( x_0)} w^2 \text{d} x,
    \]
    and
    \[
    \int_{B_{\delta}(\lambda x_0)} \abs{\nabla w_\lambda}^2 \text{d} x= \int_{B_{\delta}(x_0)} \abs{\nabla w}^2 \text{d} x.  
    \]
    Hence $w_\lambda \in H^1(\lambda D)$, and (being $2(p-r)/ (2-r)= 4/(4-p)\in(2,6)$)
    \[
        0<\frac{\abs{w}_{L^{4/(4-p)}(B_\delta(x_0))}}{\norm{w}_{H^1(B_\delta(x_0))}} = \frac{\abs{w_\lambda}_{L^{4/(4-p)}(B_\delta(\lambda x_0))}}{\norm{w_\lambda}_{H^1(B_\delta(\lambda x_0))}} \leq \sup_{u\in H^1(\lambda D)\setminus \{0\}} \frac{\abs{u}_{L^{4/(4-p)}(\lambda D)}}{\norm{u}_{H^1(\lambda D)}} =C_{\lambda D},
    \]
where the expression in the right hand side of the last inequality is the optimal constant in the embedding
    \(
        H^1(\lambda D) \hookrightarrow L^{4/(4-p)}(\lambda D).
    \)
    But then, recalling the definition of $\widetilde C_{\lambda D}$ and  \eqref{eq:primautoval} we find that
    $$\lim_{\lambda\to+\infty} \widetilde C_{\lambda D} = \lim_{\lambda\to+\infty} C_{\lambda D}^2 \left( 1+\frac{1}{\mu_{1}(\lambda D)}\right) \geq 
    \lim_{\lambda\to+\infty} C_{\lambda D}^2 \left( 1+\frac{\lambda^{2}}{\mu_{1}( D)}\right) =+\infty
    $$
    which proves the Claim.

\bigskip

{\bf Acknowledgements.}

Edwin G. Murcia is supported by Departamento de Matem\'aticas, Pontificia Universidad Javeriana, through research project ID PRY 000000000011253.

G. Siciliano was supported by Capes, CNPq, FAPDF Edital 04/2021 - Demanda Espont\^anea, 
Fapesp grants no. 2022/16407-1 and 2022/16097-2 (Brazil),  
PRIN PNRR, P2022YFAJH “Linear and Nonlinear PDEs: New directions and applications”, and 
INdAM-GNAMPA project n. E5324001950001 ``Critical and limiting phenomena in nonlinear elliptic systems'' (Italy).

\medskip

{\bf Data availability.} Data sharing not applicable to this article as no datasets were generated or analysed during the current study.

\medskip

{\bf Conflict of interest.} The authors declare that they have no conflict of interest.

\medskip

{\bf Author contributions.} The authors contributed equally to the writing of this article. All authors
read and approved the final manuscript.

\end{document}